 \newtheorem{thm}{Theorem}[section]
 \newtheorem{prop}[thm]{Proposition}
 \theoremstyle{definition}
 \newtheorem{defn}[thm]{Definition}
 \theoremstyle{remark}
 \newtheorem{rem}[thm]{Remark}
 \numberwithin{equation}{section}
\begin{document}
\renewcommand{\abstractname}{Abstract}
\renewcommand{\proofname}{Proof}
\renewcommand{\refname}{References}
%
%
%
%
%
%
%
%
%
\title[On the unitarization of linear representations of posets]
 {On the unitarization of linear representations of primitive partially ordered sets }
\author{Roman Grushevoy}

\address{%
Institute of Mathematics  NAS of Ukraine\\
Tereschenkivska str. 3\\
01601 Kiev\\
Ukraine}

\email{grushevoy@imath.kiev.ua}

\thanks{This work has been partially supported by the Scientific Program of National Academy
of Sciences of Ukraine, Project No~0107U002333. }
\author{Kostyantyn Yusenko}
\address{%
Institute of Mathematics  NAS of Ukraine\\
Tereschenkivska str. 3\\
01601 Kiev\\
Ukraine}

\email{kay@imath.kiev.ua}
\subjclass{Primary 65N30; Secondary 65N15}

\keywords{unitarizations, quivers, partially ordered sets, Dynkin
graphs, subspaces, \mbox{*-algebras}, representations}

\date{April 11, 2008}
\begin{abstract}
We describe all weights which are appropriated for the unitarization
of linear representations of primitive partially ordered sets of
finite type.
\end{abstract}
\maketitle

\section*{0. Introduction}

The representation theory of partially ordered sets (posets) in
linear vector spaces has became one of the classical field in
linear algebra (see~\cite{NazarovaRoiter, Kleiner1, Kleiner2, Drozd} and other). 
See Section~1 for the details.

On other hand when one try to develop the similar theory for
Hilbert spaces, one will be faced with difficulties even with
three subspace two of which is orthogonal: it is impossible to
obtain their description in reasonable way (it is so called
$*$-wild problem) see~\cite{KruglyaSamoilenko1,KruglyaSamoilenko2}
. But adding linear relation
$$
    \alpha_1P_1+\ldots+\alpha_nP_n=\gamma I,
$$
between the projections $P_i$ on corresponding subspaces, in some
cases gives nice answers which have deep interconnections with
linear case (see Section~2 for the details).

The aim of this article is to show that each linear representation
of primitive poset of finite type could be unitarized with some
weight $\chi=(\alpha_1,\ldots,\alpha_n,\gamma)$ and to describe
all possible weights $\chi$ appropriated to the unitarization for
given linear representation of primitive poset (Section~3).

\section{Posets and their linear representations}

In this section we will briefly recall some results concerning to
partially ordered sets and their linear representation.


\subsection {Posets and Hasse diagrams}

Let $(\mathcal N,\prec)$ be finite partially ordered set (or poset
for short) which for us will be $\{1,\ldots,n\}$.
By the width of the poset $\mathcal N$ we will understand the
maximal number of two by two incomparable elements of this set.

The poset $\mathcal N$ of the width $m$ called primitive and denoted
by $(k_1,\ldots,k_m)$ if this set is the cardinal sum of $m$
linearly ordered sets $\mathcal N_1,\ldots,\mathcal N_m$ with orders
$k_1,\ldots,k_m$.

We will use the standard graphic representations for the poset
$(\mathcal N,\prec)$ called  Hasse diagram. This representation
associate to each point $x \in \mathcal N$ the vertex $v_x$ and the
arrow $v_x\rightarrow v_y$ if $x\prec y$ and there is no $z \in
\mathcal N$, such that $x\prec z \prec y$. For example let $\mathcal
N=\{1,2,3,4,5,6\}$ with the following order:
$$
    2\prec1, \quad 2\prec3, \quad 4\prec3, \quad 6\prec5,
$$
then the corresponding Hasse diagram is the following:

\begin{picture}(200,30)(-20,0)

\put(120,0){\vector(0,1){20}}
      \put(120,-3){\circle{4}}
      \put(120,22){\circle{4}}

\put(142,0){\vector(0,1){20}} \put(122,0){\vector(1,1){20}}
      \put(142,-3){\circle{4}}
      \put(142,22){\circle{4}}

\put(180,0){\vector(0,1){20}}
      \put(180,-3){\circle{4}}
      \put(180,22){\circle{4}}

\put(108,-2){$v_2$} \put(108,18){$v_1$} \put(145,-2){$v_4$}
\put(145,18){$v_3$} \put(185,-2){$v_6$} \put(185,18){$v_5$}

\end{picture}

\vspace{0.3cm}

\noindent For each primitive poset $\mathcal N=(k_1,...k_m)$ its
Hasse diagram has the following form:

\begin{picture}(120,100)

\put(150,2){\vector(0,1){16}}
      \put(150,0){\circle{4}}
      \put(150,20){\circle{4}}
      \put(150,30) {\circle*{2}}
      \put(150,40) {\circle*{2}}
      \put(150,50) {\circle*{2}}

      \put(160,0) {\circle*{2}}
      \put(170,0) {\circle*{2}}
      \put(180,0) {\circle*{2}}

\put(150,62){\vector(0,1){16}}
      \put(150,60){\circle{4}}
      \put(150,80){\circle{4}}

\put(125,-2){$v^{(1)}_1$} \put(125,18){$v^{(1)}_2$}
\put(125,58){$v^{(1)}_{k_1-1}$} \put(125,78){$v^{(1)}_{k_1}$}

\put(190,2){\vector(0,1){16}}
      \put(190,0){\circle{4}}
      \put(190,20){\circle{4}}
      \put(190,30) {\circle*{2}}
      \put(190,40) {\circle*{2}}
      \put(190,50) {\circle*{2}}

\put(190,62){\vector(0,1){16}}
      \put(190,60){\circle{4}}
      \put(190,80){\circle{4}}

\put(200,-2){$v^{(m)}_1$} \put(200,18){$v^{(m)}_2$}
\put(200,58){$v^{(m)}_{k_m-1}$} \put(200,78){$v^{(m)}_{k_m}$}

\end{picture}

\vspace{0.5cm}

\subsection {Linear representations of posets. Indecomposability and Bricks}

\noindent By the linear representation $\pi$ of the poset
$\mathcal N$ in some complex vector space $V$ we will understand
such correspondence to each element $i \in \mathcal N$ the
subspace $V_i \subset V$, that $i \prec j$ implies $V_i \subseteq
V_j$. We will denote the representation $\pi$  in the following
way: $\pi=(V;V_1,\ldots,V_n)$, where $n$ is the cardinality of
$\mathcal N$, and for the primitive posets $(k_1,\ldots,k_m)$ we
will use the following notation
$\pi=(V;V^{(1)}_1,\ldots,V^{(1)}_{k_1};\ldots;$
$V^{(m)}_1,\ldots,V^{(m)}_{k_m}).$

By the dimension vector $d_\pi$ of the representation $\pi$ we will
understand the vector $d_\pi=(d_0;d_1,\ldots,d_n)$, where $d_0=\dim
V$, $d_i=\dim V_i$ (correspondingly for representations of primitive
posets the dimensional vector will be denoted by
$d_\pi=(d_0;d^{(1)}_1,\ldots,d^{(1)}_{k_1};\ldots;d^{(m)}_1,\ldots,d^{(m)}_{k_m})$).

In fact the linear representations of poset $\mathcal N$ forms the
additive category $\rm{Rep}(\mathcal N)$, where the set of
morphisms $\rm{Mor}(\pi_1,\pi_2)$ between two representations
$\pi_1=(V;V_1,\ldots,V_n)$ and $\pi_2=(W;W_1,\ldots,W_n)$ consists
of such linear maps $C:V \rightarrow W$, that $C(V_i)\subset W_i$.
Two representations $\pi_1$ and $\pi_2$ of poset $\mathcal N$ are
isomorphic (or equivalent) if there exists invertible morphism $C
\in \rm{Mor}(\pi_1,\pi_2)$, i.e. there exist invertible linear map
$C:V \rightarrow W$ such that $C(V_i)=W_i$.

 One can define the
direct sum $\pi=\pi_1 \oplus \pi_2$ of two objects $\pi_1, \pi_2
\in \mathcal N$ in the following way: $$\pi=(V \oplus W; V_1
\oplus W_1,\ldots,V_n \oplus W_n).$$ Using the notion of direct
sum it is natural to define \emph{indecomposable} representations
as the representations that are not isomorphic to the direct sum
of two non-zero representations, otherwise representations are
called \emph{decomposable}. It is easy to show that representation
$\pi$ is indecomposable iff there exist no non-trivial idempotents
in $\rm{End}(\pi)$. The representation $\pi$ is called
\emph{brick} if 
there is no non-trivial endomorphism of this representation. One
can show that \emph{brick} implies \emph{indecomposability}. But
there exist indecomposable representations of posets which are not
'brick': let us take
$$ A_\alpha= \left(
         \begin{array}{cc}
           1 & \alpha \\
           0 & 1 \\
         \end{array}
       \right), \quad \alpha \in \mathbb R \backslash \{0\},$$
the corresponding representation
$\pi_{\alpha}=(V;V_1,V_2,V_3,V_4)$ of the poset $\mathcal
N=(1,1,1,1)$
\begin{align*}
    V=\mathbb C^2 \oplus \mathbb C^2; \quad V_1=\mathbb
    C^2 \oplus 0, \quad V_2=0 \oplus \mathbb C^2, \\
    V_3=\left\{ (x,x)\in \mathbb C^4 | x \in \mathbb C^2\right \}, \quad
    V_4=\left\{ (x,A_\alpha x)\in \mathbb C^4 | x \in \mathbb C^2\right \},
\end{align*}
is indecomposable but is not brick.

We call the representation $\pi$ \emph{non-degenera\-ted} if the
components of dimension vector $d_\pi$ satisfy the following
conditions:
\begin{itemize}
  \item $d_i\not=0$;
  \item if $i\prec j$ then $d_i<d_j$;
  \item $d_i<d_0,$
\end{itemize}
otherwise the representation is called \emph{degenerated}. Note that
in works~\cite{Kleiner2, Drozd} authors by term
\emph{non-degenera\-ted} means just first and second from above
conditions.

\subsection {Posets and Quivers}

To study the representations of the primitive posets it is helpful
to use the well-developed representation theory of quivers (for
the representations of quivers see for
example~\cite{GabrielRoiter,AssemSimson}).

A quiver $Q=(Q_0,Q_1,s,t:Q_1\rightarrow Q_0)$ is given by a set of
vertices $Q_0$, which for us will be $\{v_0,\ldots,v_n\}$, and a
set $Q_1$ of arrows. An arrow $\rho \in Q_1$ starts at the vertex
$s(\rho)$ and terminates at $t(\rho)$. A representation $X$ of $Q$
is given by a vector space $X_{v_i}$ for each vertex $v_i \in Q_0$
and linear operator $X_{\rho}:X_{s(\rho)}\rightarrow X_{t(\rho)}$.
In similar way as for the posets one can define the set of
morphisms between two representations $X$ and $Y$,
\emph{indecomposable} and \emph{bricks} representations
(see~\cite{GabrielRoiter} for the details).

To each poset $\mathcal N$ one can associate the quiver $Q^{\mathcal
N}$ in the similar way as Hasse diagram was associated, the only
difference is that we add the extra vertex $v_0$ to diagram and
connect by arrows all vertices that corresponds to maximal elements
of $\mathcal N$ with the vertex $v_0$. More formally, $Q^{\mathcal
N}_0=\{v_0,v_x | x \in \mathcal N\}$ and $Q^{\mathcal N}_1$ consists
of:

\begin{itemize}
     \item   the arrows $v_x \rightarrow v_y$ if $x \prec y$ and there are no such $z$ that
$x\prec z\prec y$, and

     \item   the arrows $v_x \rightarrow v_0$ if there no such $z$ that $x\prec
  z$.

   \end{itemize}
For example for each primitive poset $\mathcal N=(k_1,...k_m)$ the
corresponding quiver $Q^{\mathcal N}$ has the following form:

\begin{picture}(120,50)

\put(178,0){\vector(-1,0){26}}

\put(238,0){\vector(-1,0){26}}
      \put(150,0){\circle{4}}
      \put(180,0){\circle{4}}
      \put(190,0) {\circle*{2}}
      \put(200,0) {\circle*{2}}
      \put(210,0) {\circle{4}}
      \put(240,0) {\circle{4}}

\put(178,30){\vector(-1,0){26}}

\put(118,15){\circle{4}}

      \put(160,7) {\circle*{2}}
      \put(160,14) {\circle*{2}}
      \put(160,21) {\circle*{2}}

\put(148,29){\vector(-2,-1){26}} \put(148,1){\vector(-2,1){26}}

\put(238,30){\vector(-1,0){26}}
      \put(150,30){\circle{4}}
      \put(180,30){\circle{4}}
      \put(190,30) {\circle*{2}}
      \put(200,30) {\circle*{2}}
      \put(210,30) {\circle{4}}
      \put(240,30) {\circle{4}}

\put(107,15){$v_0$} \put(150,35){$v^{(1)}_{k_1}$}
\put(170,35){$v^{(1)}_{k_1-1}$} \put(210,35){$v^{(1)}_{2}$}
\put(235,35){$v^{(1)}_{1}$}

\put(150,-15){$v^{(m)}_{k_m}$} \put(170,-15){$v^{(m)}_{k_m-1}$}
\put(210,-15){$v^{(m)}_{2}$} \put(235,-15){$v^{(m)}_{1}$}

\end{picture}

\vspace{0.5cm}

\begin{rem} Notice that the underlying graph of the quiver
$Q_{\mathcal N}$ that corresponds to primitive poset $\mathcal
N=(k_1,\ldots,k_m)$ is the tree which denoted by
$T_{k_1+1,\ldots,k_m+1}$.
\end{rem}

It is obvious that each linear representations $\pi$ of the poset
$\mathcal N=(k_1,\dots,k_m)$ defines some representation $X_\pi$
of the corresponding quiver $Q^{\mathcal N}$. Indeed, let be given
the re\-pre\-sen\-tation
$$\pi=(V;V^{(1)}_1,\ldots,V^{(1)}_{k_1};\ldots;V^{(m)}_1,\ldots,V^{(m)}_{k_m})$$
of the poset $\mathcal N$ then the corresponding representation
$X_\pi$ of quiver $Q_{\mathcal N}$ could be built in the following
way: to each vertex $v^{(j)}_i$ we associate the space
$V^{(j)}_i$, $X_{v^{(j)}_i}=V^{(j)}_i$ (to the vertex $v_0$ we
associate the space $V$, $X_{v_0}=V$) and to each arrow $v^{(j)}_i
\rightarrow v^{(j)}_{i+1}$ we associate the embedding $V^{(j)}_i
\hookrightarrow V^{(j)}_{i+1}$ (to the arrows $v^{(j)}_{k_i}
\rightarrow v_0$ we associate the embedding $V^{(j)}_{k_i}
\hookrightarrow V$). And vice versa every representation $X$ of
quiver $Q$ with underlaying graph $T_{k_1+1,\ldots,k_m+1}$, such
that for all $\rho\in Q_1$ $X_{\rho}$ is monomorphism, defines a
representation of the poset $\mathcal N=(k_1,\dots,k_n)$:
$$\pi_X=(V;V^{(1)}_1,\ldots,V^{(1)}_{k_1};\ldots;V^{(n)}_1,\ldots,V^{(n)}_{k_n}),$$
where $V=X_{v_0}$ it is a representation of the the vertex $v_0$,
and $V^{(i)}_j=X_{v^{(i)}_j \rightarrow
v^{(i)}_{j+1}}\cdot\ldots\cdot X_{v^{(i)}_{k_i}\rightarrow
v_0}(X_{v^{(i)}_j})$ is an image of space corresponding to the
vertex $v^{(i)}_j$ under the maps that corresponds to the path
from vertex $v^{(i)}_j$ to vertex $v_0$.

In fact having take the representation of poset then using
construction above build the representation of corresponding quiver
and finally after building the representation of corresponding poset
one will get the representation which is isomorphic to that we start
on, i.e. $\pi\simeq\pi_{X_\pi}$ in the category $\rm{Rep}(\mathcal
N)$.

\begin{prop}
\begin{enumerate}
  \item The representations $\pi_1$ and $\pi_2$ of the poset $\mathcal
    N$ are isomorphic if and only if the corresponding representation
    $X_{\pi_1}$ and $X_{\pi_2}$ of the quiver $Q_{\mathcal N}$ are
    isomorphic.
  \item The representation $\pi$ is indecomposable if and only if
  the corresponding representation $X_\pi$ is indecomposable.
\end{enumerate}
\end{prop}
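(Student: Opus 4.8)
The plan is to realize the two passages $\pi\mapsto X_\pi$ and $X\mapsto\pi_X$ as mutually quasi-inverse functors and to exploit the natural isomorphism $\pi\simeq\pi_{X_\pi}$ recorded above. First I would check that $\pi\mapsto X_\pi$ extends to a functor $F\colon\mathrm{Rep}(\mathcal N)\to\mathrm{Rep}(Q^{\mathcal N})$: given $C\in\mathrm{Mor}(\pi_1,\pi_2)$, that is $C\colon V\to W$ with $C(V^{(j)}_i)\subseteq W^{(j)}_i$, assign the tuple $F(C)$ whose component at $v_0$ is $C$ and whose component at $v^{(j)}_i$ is the restriction $C|_{V^{(j)}_i}\colon V^{(j)}_i\to W^{(j)}_i$. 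Each square attached to an arrow commutes because all arrow maps of $X_{\pi_1}$ and $X_{\pi_2}$ are the inclusions of subspaces and $C$ respects these inclusions; preservation of identities and composition is then immediate. Dually, $X\mapsto\pi_X$ extends to a functor $G$ on the full subcategory of quiver representations with monomorphic arrow maps, sending a quiver morphism $f=(f_v)_{v\in Q_0}$ to its $v_0$-component $f_{v_0}$; its well-definedness on morphisms is the decisive point, so I would verify it by composing the squares of $f$ along the unique path from $v^{(j)}_i$ to $v_0$, obtaining $f_{v_0}\circ p^X_{i,j}=p^Y_{i,j}\circ f_{v^{(j)}_i}$ for the composite path maps $p^X_{i,j}$, $p^Y_{i,j}$, whence $f_{v_0}(V^{(j)}_i)=p^Y_{i,j}(f_{v^{(j)}_i}(X_{v^{(j)}_i}))\subseteq W^{(j)}_i$ and $f_{v_0}\in\mathrm{Mor}(\pi_X,\pi_Y)$.

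Part (1) then follows formally. One direction is the functoriality of $F$: an isomorphism $\pi_1\simeq\pi_2$ is carried to an isomorphism $X_{\pi_1}\simeq X_{\pi_2}$. For the converse, an isomorphism $X_{\pi_1}\simeq X_{\pi_2}$ is carried by $G$ to an isomorphism $\pi_{X_{\pi_1}}\simeq\pi_{X_{\pi_2}}$, and combining this with the natural isomorphisms $\pi_i\simeq\pi_{X_{\pi_i}}$ yields $\pi_1\simeq\pi_2$.

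For part (2) I would show that $F$ induces a ring isomorphism $\mathrm{End}(\pi)\xrightarrow{\sim}\mathrm{End}(X_\pi)$ and then invoke the idempotent criterion stated above. The map $C\mapsto F(C)$ is a ring homomorphism, and it is injective because the $v_0$-component of $F(C)$ is $C$ itself. For surjectivity, take $\phi\in\mathrm{End}(X_\pi)$ and set $C=\phi_{v_0}$; the computation of the first paragraph (with $Y=X=X_\pi$) gives $C(V^{(j)}_i)\subseteq V^{(j)}_i$, so $C\in\mathrm{End}(\pi)$, while tracing the inclusions $V^{(j)}_i\hookrightarrow V$ through the commuting squares forces $\phi_{v^{(j)}_i}=C|_{V^{(j)}_i}$ at every vertex, i.e. $\phi=F(C)$. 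A ring isomorphism matches nontrivial idempotents with nontrivial idempotents, so $\mathrm{End}(\pi)$ admits a nontrivial idempotent if and only if $\mathrm{End}(X_\pi)$ does; by the criterion this says precisely that $\pi$ is decomposable if and only if $X_\pi$ is.

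The main obstacle is the bookkeeping in the two places where information must be transported between the vertex $v_0$ and the remaining vertices: the well-definedness of $G$ on morphisms and the surjectivity of $F$ on endomorphisms. Both reduce to the same computation, namely composing the commuting squares of a quiver morphism along the unique path from each vertex to $v_0$ and using that every arrow of $X_\pi$ is an inclusion, so that a map on $V$ preserving the subspaces both determines and is determined by its restrictions.
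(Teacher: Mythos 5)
Your argument is correct, and for part (1) it is essentially the paper's: the authors simply assert that a morphism $C\colon X_{\pi_1}\to X_{\pi_2}$ is an isomorphism of quiver representations if and only if its component $C_{v_0}$ is an isomorphism of poset representations, which is exactly the content you verify carefully via the two functors $F$ and $G$ and the path computation. For part (2), however, your route differs from the paper's. The paper argues by explicit direct sums: if $\pi\simeq\pi_1\oplus\pi_2$, then every vertex space and every arrow map of $X_\pi$ splits accordingly, so $X_\pi$ is decomposable --- and only this implication is written out. You instead upgrade $F$ to a ring isomorphism $\mathrm{End}(\pi)\xrightarrow{\sim}\mathrm{End}(X_\pi)$ (the surjectivity resting on the fact that all arrow maps of $X_\pi$ are inclusions, so an endomorphism of $X_\pi$ is determined by its $v_0$-component) and then invoke the idempotent criterion for indecomposability that the paper records. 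Your approach buys both implications of part (2) in one stroke, where the paper's explicit splitting as written covers only one direction, and it yields the stronger statement $\mathrm{End}(\pi)\simeq\mathrm{End}(X_\pi)$, which also transports the \emph{brick} property between the two categories; the paper's argument is more concrete but would need a separate (if routine) verification that a direct-sum decomposition of $X_\pi$ descends to one of $\pi$.
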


\begin{proof}

\begin{enumerate}

\item The morphism $C:X_{\pi_1}\rightarrow X_{\pi_2}$,
establish the isomorphism between the representations of the quiver
$Q_{\mathcal N}$ if and only if the linear map $C_{v_0}$ is a
isomorphism between the representations $\pi_1$ and $\pi_2$ of
$\mathcal N$.

\item If the representation  $\pi$ of primitive poset  $\mathcal
N$ is decomposable, i.e. $\pi\simeq\pi_1\oplus\pi_2$, then the
corresponding representation $X_\pi$ of the quiver $Q=Q_{\mathcal
N}$ has the following form: for all $v_i\in Q_0$
$X_{v_{i}}=U_i\oplus W_i$, for all $v_i\rightarrow v_j\in Q_1$
$X_{v_i\rightarrow
v_j}=\hat{\Gamma}_{i,j}\oplus\tilde{\Gamma}_{i,j}$, where
$\hat{\Gamma}_{i,j}:U_j\rightarrow U_i$ and
$\tilde{\Gamma}_{i,j}:W_j\rightarrow W_i$, i.e. $X$ is
decomposable.

\end{enumerate}

\end{proof}

\subsection{Finite type posets}

Recall the the poset $\mathcal N$ has  \emph{finite (linear)
representation type} if there exist only finitely many
indecomposable representation of $\mathcal N$ in
$\rm{Rep}(\mathcal N)$. Result obtained by M.M.Kleiner
\cite{Kleiner1}  gives complete description of the posets of
finite type. In case of primitive posets using connection between
posets and quivers and analogical result for quivers
(see~\cite{GabrielRoiter}, for instance) one can obtain
description of the primitive posets of finite type.

\begin{prop}
   Each primitive poset of finite type has one of the following
   form:

   \begin{itemize}
    \item $(k)$ for all $k\in\mathbb{N}$;
    \item $(k_1,k_2)$ for all $k_1,k_2\in\mathbb{N}$;
    \item $(k,1,1)$, for all $k\in\mathbb{N}$;
    \item $(2,2,1)$;
    \item $(3,2,1)$;
    \item $(4,2,1)$.
\end{itemize}
\end{prop}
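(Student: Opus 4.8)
The statement to prove classifies the primitive posets of finite representation type. My plan is to exploit the correspondence between primitive posets and quivers established in the Remark, which identifies the underlying graph of $Q^{\mathcal N}$ for $\mathcal N=(k_1,\ldots,k_m)$ as the tree $T_{k_1+1,\ldots,k_m+1}$, a star-shaped tree with $m$ arms of lengths $k_1+1,\ldots,k_m+1$ emanating from the central vertex $v_0$. By the preceding Proposition, $\pi$ is indecomposable if and only if $X_\pi$ is indecomposable, so finiteness of representation type for $\mathcal N$ is controlled by the representation type of the quiver $Q^{\mathcal N}$. The key input I would invoke is Gabriel's theorem (as in \cite{GabrielRoiter}): a quiver has finite representation type if and only if its underlying graph is a disjoint union of Dynkin diagrams of type $A_n$, $D_n$, $E_6$, $E_7$, or $E_8$.

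First I would reduce the question to a purely graph-theoretic one. Since $T_{k_1+1,\ldots,k_m+1}$ is connected, I need exactly those parameter tuples for which this star-shaped tree is a Dynkin diagram. A subtle point I would address carefully: the poset--quiver dictionary only uses those quiver representations in which every arrow acts as a monomorphism, so finite poset type is implied by finite quiver type but the converse direction must be checked. The standard way around this is to note that if the quiver has infinitely many indecomposables, one can produce infinitely many non-isomorphic indecomposable \emph{monomorphism} representations (equivalently non-degenerate poset representations), so the two finiteness conditions do coincide for these star quivers; I would record this equivalence before classifying.

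Next I would carry out the classification of the admissible stars. For width $m=1$ the graph is $A_{k+1}$, always Dynkin, giving $(k)$ for all $k$. For $m=2$ the graph $T_{k_1+1,k_2+1}$ is a path, hence $A_{k_1+k_2+1}$, again always Dynkin, giving $(k_1,k_2)$ for all $k_1,k_2$. For $m=3$ the tree is a genuine three-armed star with arm lengths $k_1,k_2,k_3$ (counting edges from the branch vertex). Ordering $k_1\ge k_2\ge k_3$, the Dynkin stars are exactly $D_n$ (arms $(n-2,1,1)$, giving $(k,1,1)$), $E_6$ (arms $(2,2,1)$, giving $(2,2,1)$), $E_7$ (arms $(3,2,1)$, giving $(3,2,1)$), and $E_8$ (arms $(4,2,1)$, giving $(4,2,1)$). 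For $m\ge 4$ the vertex $v_0$ would have degree at least four, which no Dynkin diagram permits, so no such posets occur. This matches the six families in the statement exactly.

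The main obstacle I anticipate is the equivalence between finite type for the poset and finite type for the associated quiver, precisely because of the monomorphism constraint in the dictionary $\pi\mapsto X_\pi$. One direction is immediate; for the other I would argue that the one-parameter families of indecomposables produced at a non-Dynkin vertex (for instance the tame cases $\widetilde{D}_4$, $\widetilde{E}_6$, $\widetilde{E}_7$, $\widetilde{E}_8$ arising from $(1,1,1,1)$, $(2,2,2)$, $(3,3,1)$, $(5,2,1)$ respectively) can be realized by representations with all arrows injective, hence correspond to genuinely distinct non-degenerate poset representations, ruling those posets out. Once this bridge is in place, the remainder is the standard and routine enumeration of Dynkin diagrams among star-shaped trees, and invoking Kleiner's theorem \cite{Kleiner1} gives an independent confirmation of the same list.
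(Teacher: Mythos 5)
Your proposal is correct and follows essentially the same route as the paper: reduce to the quiver $Q^{\mathcal N}$ with underlying star $T_{k_1+1,\ldots,k_m+1}$, invoke Gabriel's theorem for the finite-type direction, and handle the converse via the critical posets whose quivers are extended Dynkin. The only difference is one of detail: where you assert abstractly that the tame one-parameter families can be realized with all arrows injective, the paper writes down explicit $\lambda$-parametrized subspace configurations for $(1,1,1,1)$, $(2,2,2)$, and the $\tilde E_7$, $\tilde E_8$ cases.
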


\begin{proof}
    \emph{if}

    As it was shown in previous subsection that for every representation of primitive posets
    one can build the representation of corresponding quiver,
    non-isomorphic representations of the poset corresponds to
    non-isomorphic representations of the quiver. The quivers $A_{k+1}$, $A_{k_1+k_2+1}$, $D_{k+3}$,
    $E_6$, $E_7$, $E_8$ are the correspon\-ding quiver for the posets listed in statement. Each of these quivers
    has finitely many of  non-isomorphic
    indecomposable representations (see~\cite{GabrielRoiter}),
    hence the listed posets also has finitely many non-isomorphic indecomposable
    representations.

    \emph{only if}
    One can show that each primitive poset that is not included in
    list contains subposet with corresponding extended Dynkin
    quiver. But this quivers has infinitely many non-isomorphic indecomposable
    representations (see \cite{DlabRingel,DonovanFreishlih} and other for description).
    We list infinite series of non-isomorphic indecomposable
    representations of corresponding poset:

    \begin{itemize}
\item for the poset $(1,1,1,1)$ (the corresponding quiver is $\tilde D_4$):
$$
V=\mathbb{C}^2=\langle e_1, e_2\rangle; V_1=\langle e_1\rangle,
V_2=\langle e_2\rangle, V_3=\langle e_1+e_2\rangle, V_4=\langle
e_1+\lambda e_2\rangle, \lambda\neq 0,1;
$$

\item for the poset $(2,2,2)$ (the corresponding quiver is $\tilde E_6$):
\begin{gather*}
V=\mathbb{C}^3=\langle e_1, e_2, e_3\rangle;\\
V_1=\langle e_1\rangle, V_2=\langle e_1, e_2\rangle;
V_3=\langle e_3\rangle, V_4=\langle e_2, e_3\rangle;\\
V_5=\langle e_1+e_2+e_3\rangle, V_6=\langle e_1+e_2+e_3, \lambda
e_1+e_2\rangle, \lambda\neq 0,1;
\end{gather*}

\item for the poset $(3,3,2)$ (the corresponding quiver is $\tilde E_7$):
\begin{gather*}
V=\mathbb{C}^4=\langle e_1, e_2, e_3, e_4\rangle;\\
V_1=\langle e_1\rangle, V_2=\langle e_1,
e_2\rangle, V_3=\langle e_1, e_2, e_3\rangle;\\
V_4=\langle e_4\rangle, V_5=\langle e_3, e_4\rangle,
V_6=\langle e_2, e_3, e_4\rangle;\\
V_7=\langle e_1+e_2+e_3, \lambda e_1+e_3+e_4\rangle, \lambda\neq
0,1;
\end{gather*}
\item for the poset $(5,2,1)$ (the corresponding quiver is $\tilde E_7$):
\begin{gather*}
V=\mathbb{C}^6=\langle e_1, e_2, e_3, e_4, e_5, e_6\rangle;\\
V_1=\langle e_1\rangle, V_2=\langle e_1, e_2\rangle,V_3=\langle e_1,
e_2, e_3\rangle, V_4=\langle e_1, e_2, e_3, e_4\rangle,
V_5=\langle e_1, e_2, e_3, e_4, e_5\rangle;\\
V_6=\langle e_5, e_6\rangle, V_7=\langle e_3, e_4, e_5, e_6
\rangle;\\
 V_8=\langle e_1+e_3+e_4+e_5, \lambda e_1+e_3+e_5+e_6,
e_2+e_4 \rangle, \lambda\neq 0,1;
\end{gather*}
here vectors $e_i$ form basis in corresponding linear spaces.
\end{itemize}
 Hence the theorem is proved.
\end{proof}

\subsection{Coxeter functors for representations of posets in linear
spaces} \label{linearCoxeter}

Coxeter functors for representations of quivers were studied
in~\cite{BersteinGelfandPonomarev} for proving Gabriel's theorem.

Similar Coxeter functors $F^+, F^-$ acting in the category
$\rm{Rep}(\mathcal N)$ of linear representations of posets was
constructed in~\cite{Drozd}. We are not going to give their full
construction here, but we remind some basic necessary facts.

There are functors $\sigma, \rho$ ($\sigma^2=\rm{id},
\rho^2=\rm{id}$) which act between categories $\rm Rep(\mathcal N)$
and $\rm Rep(\mathcal N^*)$ (where $\mathcal N^*$ is dual to poset
$\mathcal N$). These functors change dimensions of representations
by the following formulas:
\begin{multline*}
(d_0; d^{(1)}_1,\ldots,d^{(1)}_{k_1};
\dots;d^{(m)}_1,\ldots,d^{(m)}_{k_m})
\stackrel{\rho}{\longmapsto}(\sum_{j=1}^md^{(j)}_{k_j}-d_0;
d^{(1)}_{k_1}, \\
d^{(1)}_{k_1}-d^{(1)}_1,\dots,d^{(1)}_{k_1}-d^{(1)}_{k_1-1};\dots;
d^{(m)}_{k_1},\ldots,d^{(m)}_{k_m}-d^{(m)}_{k_m-1});
\end{multline*}
\begin{multline*}
(d_0; d^{(1)}_1,\ldots,d^{(1)}_{k_1};\ldots;d^{(m)}_1,
\dots,d^{(m)}_{k_m})\stackrel{\sigma}{\longmapsto}(d_0;
d_0-d^{(1)}_1,\ldots,d_0-d^{(1)}_{k_1};\ldots; \\
d_0-d^{(m)}_{1},\ldots, d_0-d^{(m)}_{k_m}).
\end{multline*}


By definition $F^+=\rho\sigma,~F^-=\sigma\rho$, hence their action
on generalized dimensions could be written as follows:
\begin{gather}
(d_0; d^{(1)}_1,\ldots,d^{(1)}_{k_1};
\dots;d^{(m)}_1,\ldots,d^{(m)}_{k_m})\stackrel{F^+}{\longmapsto}(\sum_{j=1}^md^{(j)}_{k_j}-d_0;
\sum_{j=2}^md^{(j)}_{k_j}-d_0,\notag\\
\sum_{j=2}^md^{(j)}_{k_j}-d_0+d^{(1)}_1,\ldots,
\sum_{j=2}^md_{k_j}-d_0+d^{(1)}_{k_1-1}; \ldots;
\sum_{j=1}^{m-1}d^{(j)}_{k_j}-d_0+d^{(m)}_{k_m-1});\label{FplusDim}
\end{gather}
\begin{gather}
(d_0; d^{(1)}_1,\ldots,d^{(1)}_{k_1};
\dots;d^{(m)}_1,\ldots,d^{(m)}_{k_m})\stackrel{F^-}{\longmapsto}((m-1)d_0-\sum_{j=1}^md^{(j)}_{k_j};
d^{(1)}_2-d^{(1)}_1,\notag\\
d^{(1)}_3-d^{(1)}_1, \ldots,
d^{(1)}_{k_1-1}-d^{(1)}_1,d_0-d^{(1)}_1;\ldots;
d^{(m)}_{k_l-1}-d^{(m)}_1,d_0-d^{(m)}_1).\label{FminusDim}
\end{gather}

Importance of these functors is due to the following
fact~(see~\cite{Drozd}):

\emph{ Any non-degenerate representation of finite type poset
$\mathcal
N$ is the following: $(F^+)^k\pi$, where $\pi$ is degenerate.
}

\section{Representations of posets in Hilbert spaces}

In this section we will consider the unitary representation of the
posets.  We will show how these representations connected with
$*$-representation of certain algebras that generated by
projections and linear relations. The representations of these
algebras associated with primitive posets could be studied using
the Coxeter functors, which will allow us study unitary
representations of the posets, using similar technique.

\subsection{Unitary representations of posets}
Denote by $\rm{Rep}(\mathcal N, H)$ the sub-category in
$\rm{Rep}(\mathcal N)$. Its set of objects consists of
finite-dimensional Hilbert spaces and two objects $\pi$ and
$\tilde{\pi}$ are equivalent in $\rm{Rep}(\mathcal N)$ if there
exists morphisms between them which is unitary operator
$U:H\rightarrow\tilde H$ such that $U(H_i)= \tilde H_i$ (unitary
equivalent). Representation $\pi\in\rm{Rep}(\mathcal N, H)$ called
\emph{irreducible} iff $C^*$-algebra generated by set of
orthogonal projections $\{P_i\}$ on the subspaces $\{H_i\}$ is
irreducible.
Let remark that indecomposability of representation $\pi$ in
$\rm{Rep}(\mathcal N)$ implies irreducibility of $C^*(\{P_i,
i\in\mathcal N\})$ but converse not
true~(see~\cite{EnomotoWatatani} for details).

The problem of classifying all irreducible objects in category
$\rm{Rep}(\mathcal N, H)$ becomes much harder 
even for primitive poset $\mathcal N=(1,2)$ it is
impossible to classify in reasonable way all irreducible
representations: indeed this lead us to classify up to unitary
equivalence three subspace in Hilbert space, two of which are
orthogonal, but it is well-known~\cite{KruglyaSamoilenko1} that
such task is $*$-wild. Hence it is natural to consider some
additional relation.

Let us consider those objects $\pi \in \rm{Rep}(\mathcal N, H)$,
$\pi=(H;H_1,\ldots,H_2)$, which satisfy the following linear
relation:
\begin{equation}
    \alpha_1P_1+\ldots+\alpha_nP_n=\gamma I, \label{linear_rel}
\end{equation}
where $\alpha_i,\gamma$ are some positive real numbers. These
objects form a category and we will also will denote it by
$\rm{Rep}(\mathcal N, H)$ (this will not make problem, course in
the sequel we will consider only those representations in Hilbert
space of the posets, which satisfy~\eqref{linear_rel}). If to fix
the weight $\chi=(\alpha_1,\ldots,\alpha_n;\gamma) \in \mathbb
R^{n+1}_+$ and consider only representations
satisfying~\eqref{linear_rel}) with this fixed weight one obtain
another category $\rm{Rep}_{\chi}(\mathcal N, H)$ which is
subcategory in $\rm{Rep}(\mathcal N, H)$.

The surprising result of~\cite{KruglyakNazarovaRoiter} is that in
category $\rm{Rep}(\mathcal N)$ each indecomposable representation
equivalent to some object in $\rm{Rep}(\mathcal N, H)$ is brick.

\subsection{Certain $*$-algebras and their representation in the category of Hilbert spaces}

Let be given the weight $\chi=(\alpha_1,\ldots,\alpha_n;\gamma) \in
\mathbb R^{n+1}_+$, consider the following $*$-algebra $\mathcal
A_{\mathcal N,\chi}$, which is generated by $n$ projections and
corresponds to poset $\mathcal N$ of order $n$:
\begin{multline} \label{algebra}
A_{\mathcal N, \chi}:=\mathbb{C}\langle p_1,p_2,\dots,p_n |
p_i=p^2_i=p^*_i, p_i p_j=p_jp_i=p_i, i\prec j, \\
\alpha_1p_1+\ldots+\alpha_np_n=\gamma e\rangle.
\end{multline}
It's easy to see that every $*$-representation $\pi$ of
$*$-algebra $\mathcal{A}_{\mathcal N,\chi}$ gives us
representation of poset $\mathcal N$ in category
$\rm{Rep}_{\chi}(\mathcal N, H)$ ($H_i=\pi(p_i)$), and vice versa
every representation of poset $\mathcal N$ in category
$\rm{Rep}_{\chi}(\mathcal N, H)$  generate the $*$-representation
of $\mathcal{A}_{\mathcal N,\chi}$. To put it another way,
categories $\rm{Rep}_{\chi}(\mathcal N, H)$  and
$\rm{Rep}(\mathcal{A}_{\mathcal N,\chi})$ are equivalent.

For primitive poset $\mathcal N=(k_1,\ldots,k_m)$ weights, for
convenience, will be denoted by $\chi=(\alpha^{(1)}_1,\ldots,
\alpha^{(1)}_{k_1};\ldots;\alpha^{(m)}_{1},\ldots,\alpha^{(m)}_{k_m};\gamma)$
and correspondingly
\begin{multline}
\mathcal{A}_{\mathcal N,\chi}=\langle p^{(1)}_1,\ldots,p^{(1)}_{k_1},\ldots,p^{(m)}_1,\ldots,p^{(m)}_{k_m} | p^{(j)2}_i=p^{(j)*}_i=p^{(j)}_i, \\
p^{(j)}_ip^{(j)}_k=p^{(j)}_kp^{(j)}_i=p^{(j)}_i,
\sum_{j=1}^m\sum_{i=1}^{k_l}\alpha^{(j)}_ip^{(j)}_i=\gamma e
\rangle.
\end{multline}
Under some condition there exists isomorphisms between these
algebras and $*$-algebras $\mathcal{A}_{\Gamma,\hat{\chi}}$ which
are associated  with star-shaped graphs
$\Gamma=T_{k_1+1,\dots,k_m+1}$ and character $\hat
\chi=(\beta^{(1)}_1,\ldots,
\beta^{(1)}_{k_1};\ldots;\beta^{(m)}_{1},\ldots,\beta^{(m)}_{k_m};\gamma)$.
$*$-Algebras $\mathcal{A}_{\Gamma,\hat{\chi}}$ defined  in the
following way:
\begin{multline}
\mathcal{A}_{\Gamma,\hat{\chi}}=\langle
q^{(1)}_1,\ldots,q^{(1)}_{k_1},\ldots,q^{(m)}_1,\ldots,q^{(m)}_{k_m}
| q^{(j)*}_i=q^{(j)}_i,
q^{(j)}_iq^{(j)}_k=q^{(j)}_kq^{(j)}_i=\delta_{ik}q^{(j)}_i, \\
\sum_{j=1}^l\sum_{i=1}^{k_l}\beta^{(j)}_iq^{(j)}_i=\gamma e \rangle.
\end{multline}

\begin{prop}

 Let $\mathcal N$ be primitive poset and $\chi=(\alpha^{(1)}_1,\ldots,
\alpha^{(1)}_{k_1};\ldots;\alpha^{(m)}_{1},\ldots,\alpha^{(m)}_{k_m};\gamma)$
and $\hat \chi=(\beta^{(1)}_1,\ldots,
\beta^{(1)}_{k_1};\ldots;\beta^{(m)}_{1},\ldots,\beta^{(m)}_{k_m};\gamma)$
be two given weights such that for all $j=1,\ldots,m$,
$i=1,\ldots,k_j$ the following conditions hold:
$$
\beta^{(j)}_i=\sum_{s=i}^{k_j}\alpha^{(j)}_s,
$$
then there exist isomorphism $\phi$ between $*$-algebras
$\mathcal{A}_{\mathcal N,\chi}$ and
$\mathcal{A}_{\Gamma,\hat{\chi}}$.
\end{prop}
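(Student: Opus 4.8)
The plan is to exploit that both $\mathcal{A}_{\mathcal N,\chi}$ and $\mathcal{A}_{\Gamma,\hat{\chi}}$ are \emph{universal} $*$-algebras given by generators and relations; consequently, to produce a $*$-homomorphism out of either one it suffices to assign to each generator an element of the target algebra and verify that these images satisfy the defining relations. I would therefore guess the isomorphism explicitly on generators and then check the relations, rather than construct it abstractly. Working inside a single chain $j$ (the chains do not interact, since generators carrying distinct superscripts appear in disjoint blocks of the presentation), the key observation is that the \emph{nested} projections $p^{(j)}_1 \le p^{(j)}_2 \le \dots \le p^{(j)}_{k_j}$ of the poset algebra and the \emph{mutually orthogonal} projections $q^{(j)}_1,\dots,q^{(j)}_{k_j}$ of the star algebra are related by passing to partial sums and successive differences.

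Accordingly, I define $\phi \colon \mathcal{A}_{\mathcal N,\chi} \to \mathcal{A}_{\Gamma,\hat{\chi}}$ on generators by
$$
\phi\bigl(p^{(j)}_i\bigr) = \sum_{s=1}^{i} q^{(j)}_s ,
$$
and I propose the candidate inverse $\psi \colon \mathcal{A}_{\Gamma,\hat{\chi}} \to \mathcal{A}_{\mathcal N,\chi}$ given by $\psi(q^{(j)}_i) = p^{(j)}_i - p^{(j)}_{i-1}$, with the convention $p^{(j)}_0 = 0$. First I would verify that $\phi$ is a well-defined $*$-homomorphism. Since the $q^{(j)}_s$ are orthogonal self-adjoint idempotents, each partial sum $\phi(p^{(j)}_i)$ is again a self-adjoint idempotent, and for $i \prec k$ (that is, $i < k$) one computes $\phi(p^{(j)}_i)\phi(p^{(j)}_k) = \phi(p^{(j)}_i)$ directly from orthogonality, so the nesting relations are respected.

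The decisive point is the linear relation: substituting the images and interchanging the order of summation turns $\sum_i \alpha^{(j)}_i \phi(p^{(j)}_i)$ into $\sum_s \bigl(\sum_{i \ge s} \alpha^{(j)}_i\bigr) q^{(j)}_s$, and the hypothesis $\beta^{(j)}_s = \sum_{i=s}^{k_j}\alpha^{(j)}_i$ is \emph{exactly} what identifies this with $\sum_s \beta^{(j)}_s q^{(j)}_s = \gamma e$. Symmetrically, for $\psi$ the differences $p^{(j)}_i - p^{(j)}_{i-1}$ are orthogonal projections precisely because the $p^{(j)}_i$ are nested, and an Abel (summation-by-parts) computation converts $\sum_i \beta^{(j)}_i \psi(q^{(j)}_i)$ into $\sum_i (\beta^{(j)}_i - \beta^{(j)}_{i+1}) p^{(j)}_i = \sum_i \alpha^{(j)}_i p^{(j)}_i = \gamma e$, using the same hypothesis in the form $\beta^{(j)}_i - \beta^{(j)}_{i+1} = \alpha^{(j)}_i$.

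Finally I would check that $\phi$ and $\psi$ are mutually inverse, which is a short telescoping computation: $\psi(\phi(p^{(j)}_i)) = \sum_{s=1}^i (p^{(j)}_s - p^{(j)}_{s-1}) = p^{(j)}_i$ and $\phi(\psi(q^{(j)}_i)) = \sum_{s=1}^i q^{(j)}_s - \sum_{s=1}^{i-1} q^{(j)}_s = q^{(j)}_i$. I expect the main obstacle to be purely a matter of bookkeeping: confirming that the proposed images genuinely satisfy \emph{all} the defining relations, so that the universal property applies and both $\phi$ and $\psi$ descend to honest $*$-homomorphisms; once well-definedness is secured, bijectivity is immediate from the telescoping identities. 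No spectral or analytic input is needed, as the entire argument takes place at the level of the abstract generators and relations.
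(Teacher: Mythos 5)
Your proposal is correct and uses exactly the same generator assignment $p^{(j)}_i \mapsto \sum_{s=1}^{i} q^{(j)}_s$ as the paper, whose proof simply asserts that this map establishes the isomorphism; you have merely supplied the routine verifications (well-definedness via the universal property, the index interchange giving $\beta^{(j)}_s=\sum_{i\ge s}\alpha^{(j)}_i$, and the explicit inverse $q^{(j)}_i\mapsto p^{(j)}_i-p^{(j)}_{i-1}$) that the paper leaves to the reader. No discrepancy in approach.
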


\begin{proof}
    One can easily proof that map which is given on generators of algebras
    in the following way:
$$
p^{(j)}_i\stackrel{\phi}{\longmapsto}\sum_{s=1}^iq^{(j)}_s,\quad
j=1,\dots, l, i=1,\dots, k_j,
$$
establishes the isomorphism between $\mathcal{A}_{\mathcal N,\chi}$
and $\mathcal{A}_{\Gamma,\hat{\chi}}$.
\end{proof}

The isomorphism $\phi$ can be extended to an equivalence functor
between categories of representations of the algebras. Let $\pi$ be
representation of $*$-algebra $\mathcal{A}_{\Gamma,\hat{\chi}}$ then
$\phi\circ\pi$~-- representation of $\mathcal{A}_{\mathcal N,\chi}$.
Furthermore if $\pi_1$ and $\pi_2$ are equivalent objects in
$\rm{Rep}(\mathcal{A}_{\Gamma,\hat{\chi}})$ then $\phi\circ\pi_1$
and $\phi\circ\pi_2$ are equivalent objects in
$\rm{Rep}(\mathcal{A}_{\mathcal N,\chi})$.

Let as recall some definitions from~\cite{KPS05}:
\begin{defn}
An irreducible finite dimensional $*$-representation $\pi$ of the
algebra $\mathcal{A}_{\Gamma,\hat{\chi}}$ such that
\begin{gather*}
\pi(q^{(j)}_i)\neq 0, j=1,\dots, m, i=1, \dots, k_j) \mbox{ and}\\
\sum_{i=1}^{k_j} \pi(q^{(j)}_i)\neq I, j=1, \dots, m
\end{gather*}
will be called {\it non-degenerate}. By
$\overline{\rm{Rep}}(\mathcal{A}_{\Gamma,\hat{\chi}})$ we will
denote full subcategory of non-degenerate representations in the
category $\rm{Rep}(\mathcal{A}_{\Gamma,\hat{\chi}})$.
\end{defn}

Objects that corespondents to  subcategory
$\overline{\rm{Rep}}(\mathcal{A}_{\Gamma,\hat{\chi}})$ in
$\rm{Rep}\mathcal{A}_{\mathcal N,\chi})$ will be called
non-degenerated as well. Non-degenerate representations of
$*$-algebra $\mathcal{A}_{\mathcal N,\chi}$  corresponds to
non-degenerate representations of poset $\mathcal N$.

If one have degenerate representation of poset $\mathcal N$ in
Hilbert space $H$, that is one of the following conditions are hold:
\begin{itemize}
 \item There exist $i$ such that $H_i=0$ i.e. $P_i=0$;
 \item There exist $i$ such that $H_i=H_{i-1}$ i.e. $P_i=P_{i-1}$;
 \item There exist $i$ such that $H_i=H$ i.e. $P_i=I$
\end{itemize}

In this case one can obtain non-degenerate representation of some
subposet of $\mathcal N$ as follows. If for some $i$, $P_i=0$ then
one automatically have representation of poset $\mathcal
N'=\mathcal N\setminus\{i\}$. If in representation $\pi$ one have
$P_i=P_{i-1}$ than one can consider $\pi$ as representation of
poset $\mathcal N'$ where $\pi(i-1)=H_i=H_{i-1}$ and
$\sum_{j\in\mathcal
N\setminus\{i-1,i\}}\alpha_jP_j+(\alpha_{i-1}+\alpha_i)P_{i-1}=\gamma
I$. And finally, if for some $i$ $\pi(i)=H$ one can consider
representation $\pi$ as representation of $\mathcal N'$ for which
holds $\sum_{j\in\mathcal N'}\alpha_jP_j=(\gamma-\alpha_i) I$.

\subsection{Coxeter functors for representations of $*$-algebras
$\mathcal{A}_{\mathcal N,\chi}$ in case of primitive poset
$\mathcal N$}

 In work \cite{KruglyakRoiter} there
were developed Coxeter functors for representations of quivers in
Hilbert spaces which prescribe so-called 'orthoscalarity'
condition. It was proved that any irreducible representation of
quiver which underlying graph is Dynkin diagram can be obtained
from simplest ones by using this Coxeter functors.

These functors allows to construct some functors ${\Phi}^+$ and
${\Phi}^-$, called by Coxeter too, which act on representations of
algebras $\mathcal{A}_{\Gamma,\hat{\chi}}$. Recall that
$*$-algebras $\mathcal{A}_{\Gamma,\hat{\chi}}$ and their
involutive representations are studied in many recent works
(see~\cite{KPS05,OstrovskyiSamoilenko} and other).
 Using the isomorphism $\phi$ between $*$-algebras
$\mathcal{A}_{\Gamma,\hat{\chi}}$ and $\mathcal{A}_{\mathcal
N,\chi}$ we obtain two new functors which we will be also denoted
by ${\Phi}^+$ and ${\Phi}^-$ and called Coxeter functors. These
functors act from category $\rm Rep(\mathcal{A}_{\mathcal
N,\chi})$ to $\rm Rep(\mathcal{A}_{\mathcal N,\chi_{\Phi^+}})$ and
$\rm Rep(\mathcal{A}_{\mathcal N,\chi_{\Phi^-}})$ respectively,
where
\begin{multline*}
\chi_{\Phi^+}=(\gamma-\sum_{i=1}^{k_1}\alpha^{(1)}_i,
\alpha^{(1)}_1, \dots, \alpha^{(1)}_{k_1-1};\dots;\notag\\
\gamma-\sum_{i=1}^{k_m}\alpha^{(m)}_i, \alpha^{(m)}_1, \dots,
\alpha^{(m)}_{k_m-1};(m-1)\gamma-\sum_{j=1}^m\alpha^{(j)}_{k_j});
\end{multline*}
\begin{multline*}
\chi_{\Phi^-}=(\alpha^{(1)}_2,
\alpha^{(1)}_3,\dots,\alpha^{(1)}_{k_1},
\sum_{j=2}^m\sum_{i=1}^{k_j}\alpha^{(j)}_i-\gamma; \dots;\notag\\
\alpha^{(k_m)}_2, \dots,
\sum_{j=1}^{m-1}\sum_{i=1}^{k_j}\alpha^{(j)}_i-\gamma;
\sum_{j=1}^m\sum_{i=1}^{k_j}\alpha^{(j)}_i-\gamma).
\end{multline*}

In other words, this functors can be considered as functors from
$\rm Rep_{\chi}(\mathcal N,H)$ to $\rm
Rep_{\chi_{\Phi^{\pm}}}(\mathcal N,H)$. Dimensions of
representations are changed by formulas similar
to~\eqref{FminusDim} and~\eqref{FplusDim} respectively.

\section{Unitarization of linear representations of primitive posets}

This section is devoted to the interconnection between linear and
unitary representation of the primitive posets. Actually for
primitive posets of finite type we will obtain the complete list
of possible weights that appropriate to given indecomposable
linear representation.

\subsection{Unitarization}

Let be given the representation $\pi \in \rm{Rep}(\mathcal N)$,
$\pi=(V;V_1,\ldots,V_n)$ of the poset $\mathcal N$. One say that
$\pi$ can be unitarized if there exists appropriate choice of
hermitian structure $\langle\cdot,\cdot\rangle_{\mathbb C}$ in $V$,
that corresponding projection $P_i:V\rightarrow V_i$ satisfy the
following relation:
$$
    \alpha_1P_1+\ldots+\alpha_nP_n=\gamma I,
$$
for some weight $\chi=(\alpha_1,\ldots,\alpha_n;\gamma) \in \mathbb
R^{n+1}_+$, and correspondingly we say that $\pi$ can be unitarized
with weight $\chi$ if this weight is fixed.

Recall that the similar notion of \emph{unitarization} was
provided in article \cite{KruglyakRoiter} for the unitarization of
representations of given quiver $Q$. It fact that work studied the
question when it is possible to define hermitian structure of each
space $X_i, i\in Q_0$, in such way that in each vertex $i$ the
following condition satisfies:
$$
    \sum_{j\longrightarrow i} X_{j\longrightarrow i} X^*_{i\longrightarrow
    j}+\sum_{i\longrightarrow j} X^*_{i\longrightarrow i} X_{j\longrightarrow
    j}=\alpha_i I_{X_i},
$$
where $X^*_{i\longrightarrow j}$ denotes the adjoint map to
$X_{j\longrightarrow i}$ with respect to hermitian structure in
$X_i$ and $X_j$. One of the results of paper~\cite{KruglyakRoiter}
is that if $Q$ is a Dynkin quiver then every representation could
be unitarizable, and if $Q$ is an extended Dynkin quiver then
there are representations that cannot be unitarized.

One can obtain analogical fact for representations of primitive
poset of finite type using Coxeter functos. Noticed that it can be
obtained using results from~\cite{KruglyakRoiter}.

\begin{prop}
Every indecomposable linear representation of primitive poset of
finite type can be unitarized with some weight.
\end{prop}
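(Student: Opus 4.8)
The plan is to reduce the statement to the unitarizability of representations of Dynkin quivers proved in~\cite{KruglyakRoiter}, and to transport that result back to the poset along the poset--quiver dictionary and the algebra isomorphism $\phi$ of the preceding subsection. Let $\pi=(V;V^{(1)}_1,\ldots,V^{(m)}_{k_m})$ be an indecomposable linear representation of a primitive poset $\mathcal N=(k_1,\ldots,k_m)$ of finite type. By the proposition relating representations of $\mathcal N$ with those of $Q^{\mathcal N}$, the associated quiver representation $X_\pi$ is again indecomposable; by the remark following the construction of $Q^{\mathcal N}$ its underlying graph is the tree $T_{k_1+1,\ldots,k_m+1}$, and by the classification of finite-type primitive posets this tree is one of the Dynkin graphs $A_n,D_n,E_6,E_7,E_8$. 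Thus $X_\pi$ is an indecomposable representation of a Dynkin quiver and~\cite{KruglyakRoiter} applies: one can endow every space $X_v$, $v\in Q^{\mathcal N}_0$, with a Hermitian structure so that the orthoscalarity relation holds at each vertex for some positive character. Keeping the resulting Hermitian structure on $V=X_{v_0}$, this orthoscalar representation is, in the language of this paper, a $*$-representation of the algebra $\mathcal A_{\Gamma,\hat\chi}$ attached to the star-shaped graph $\Gamma=T_{k_1+1,\ldots,k_m+1}$ for a suitable weight $\hat\chi=(\beta^{(1)}_1,\ldots,\beta^{(m)}_{k_m};\gamma)$.

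I would then invoke the isomorphism $\phi\colon\mathcal A_{\mathcal N,\chi}\to\mathcal A_{\Gamma,\hat\chi}$ together with the induced equivalence of representation categories to pull this $*$-representation back to a $*$-representation of $\mathcal A_{\mathcal N,\chi}$, the weight $\chi$ being recovered from $\hat\chi$ by inverting $\beta^{(j)}_i=\sum_{s=i}^{k_j}\alpha^{(j)}_s$, that is $\alpha^{(j)}_i=\beta^{(j)}_i-\beta^{(j)}_{i+1}$ for $i<k_j$ and $\alpha^{(j)}_{k_j}=\beta^{(j)}_{k_j}$. Since $\mathrm{Rep}_\chi(\mathcal N,H)$ and $\mathrm{Rep}(\mathcal A_{\mathcal N,\chi})$ are equivalent, this $*$-representation \emph{is} a unitarization of a poset representation; and because the construction only rechooses the Hermitian structure without altering the configuration of subspaces up to isomorphism (the arrow maps of an orthoscalar representation isomorphic to $X_\pi$ remain monomorphisms, so their images reproduce the lattice of the $V^{(j)}_i$), its underlying linear representation is again $\pi$. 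Hence $\pi$ is unitarized with weight $\chi$.

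The step I expect to be the main obstacle is the positivity of the weight, $\chi\in\mathbb R^{n+1}_+$. The isomorphism $\phi$ is available precisely when all $\alpha^{(j)}_s>0$, i.e.\ when the weight $\hat\chi$ produced above is \emph{strictly decreasing} along each leg, $\beta^{(j)}_1>\cdots>\beta^{(j)}_{k_j}>0$. One must therefore check that the character furnished by~\cite{KruglyakRoiter} can be chosen with this monotonicity; the natural way is to propagate the orthoscalarity relations down each leg $v^{(j)}_1\to\cdots\to v^{(j)}_{k_j}\to v_0$ and to verify that indecomposability of $X_\pi$ forces the corresponding partial sums to decrease strictly. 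An alternative that avoids $\phi$ altogether is to argue with the Coxeter functors $\Phi^{\pm}$: unitarize the degenerate seed representations by hand, write every non-degenerate indecomposable as $(F^+)^k\pi_0$, and apply $\Phi^+$ repeatedly, using that it changes dimension vectors by the formula~\eqref{FplusDim}; there the same difficulty reappears as the requirement that the transformed weights $\chi_{\Phi^+}$ stay in $\mathbb R^{n+1}_+$ at every step.
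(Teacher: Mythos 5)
Your main route is not the one the paper takes: the paper argues by induction, unitarizing degenerate representations by passing to a subposet (with the one\--dimensional case settled by the trace condition $\sum_s d_s\alpha_s=\gamma$) and then writing every non\--degenerate indecomposable as $(F^+)^k\pi'$ with $\pi'$ degenerate and applying the Hilbert\--space Coxeter functor $(\Phi^+)^k$ to a unitarization of $\pi'$. That is precisely the ``alternative that avoids $\phi$'' you sketch in your last sentence, so the paper's proof is your plan B, not your plan A.

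Your plan A has a genuine gap beyond the positivity issue you flag, namely the step ``keeping the resulting Hermitian structure on $V=X_{v_0}$, this orthoscalar representation is a $*$-representation of $\mathcal A_{\Gamma,\hat\chi}$.'' It is not. Take a single leg of length $2$ with injective maps $A:U_1\to U_2$, $B:U_2\to V$ and an orthoscalar character $(c_1,c_2,c_0)$ with all entries positive. The relations $A^*A=c_1I$ and $AA^*+B^*B=c_2I$ give $B^*B=c_2I-c_1Q$ with $Q$ the projection onto $\operatorname{im}A$, and the polar decomposition of $B$ then yields $BB^*=c_2P_2-c_1P_1$, where $P_2,P_1$ are the orthogonal projections onto $V_2=B(U_2)$ and $V_1=BA(U_1)$. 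So the relation at the root is an \emph{alternating\--sign} combination of the projections onto the nested image subspaces; whenever some $k_j\geq 2$ it cannot be rewritten as $\sum\alpha^{(j)}_iP^{(j)}_i=\gamma I$ with all $\alpha^{(j)}_i>0$ by simply reading off the orthoscalar character as $\hat\chi$ and inverting $\beta^{(j)}_i=\sum_{s\geq i}\alpha^{(j)}_s$. The actual equivalence between locally scalar representations of $\Gamma$ and $\mathrm{Rep}(\mathcal A_{\Gamma,\hat\chi})$ is the nontrivial content of~\cite{KPS05} and involves a different transformation of characters; you would have to invoke it explicitly, and even then the question of which weights land in $\mathbb R^{n+1}_+$ (the obstacle you correctly identify) remains. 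The Coxeter\--functor route the paper follows sidesteps the translation problem, because the base cases are unitarized directly from the trace condition and the weight is then tracked through finitely many applications of $\Phi^+$; the admissible weights are what the subsequent theorem tabulates.
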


\begin{proof}
To start with, remark that it is obvious that any one dimensional
representation with dimension vector $d_\pi=(1;d_1,\ldots,d_n)$ of
any poset can be unitarized with weight
$\chi=(\alpha_1,\ldots,\alpha_n;\gamma)$ satisfied \emph{trace
}condition $\sum_{s\in\mathcal N}d_s\alpha_s=\gamma$. All
indecomposable representations of primitive posets
$\mathcal{N}=(k), k\in\mathbb{N}$ and $\mathcal N=(k_1,k_2)$,
$k_1,k_2\in\mathbb{N}$ are one-dimensional hence they could be
unitarized.

Let $\pi$ be some representation of primitive poset $\mathcal N$
of finite type. There are two possibilities: $\pi$ is degenerate
or $\pi$ is non-degenerate. In first case the representation $\pi$
can be considered as one of some subposet $\mathcal N'$ of
$\mathcal N$ than by induction it unitarize and corresponding
unitary representation of $\mathcal N'$ can be restricted to
unitary representation of $\mathcal N$ which is unitarization
of~$\pi$.

In the later case representation $\pi$ by theorem
from~\cite{Drozd} which was mentioned in
section~\ref{linearCoxeter}  is as follows $\pi=(F^+)^k\pi'$ that
is $(F^-)k\pi=\pi'$ where $\pi'$ is degenerate. As degenerate
representation $\pi'$ unitarize with some weight $\chi'$. Using
functor $(\Phi^+)^k$ to corresponded unitary representation we
obtain unitary representation with some weight $\chi$ equivalent
to $\pi$. That is $\pi$ unitarize with weight $\chi$. As the
result we obtain statement of the theorem and even more:  this
gives the algorithm which allows to describe all possible weights
with which given representation of finite type primitive poset can
be unitarized.
\end{proof}

Next theorem gives complete list of all possible weights for
rep\-re\-senta\-tions of primitive posets of finite type. In other
words it describes the set of weights $\chi$ for every primitive
poset $\mathcal N$ such that $*$-algebra $\mathcal{A}_{\mathcal
N,\chi}$ has indecomposable representation in fixed dimention $D$.
Analogical result for $*$-algebras $\mathcal{A}_{\Gamma,\hat{\chi}}$
were obtained in~\cite{KPS05} and following theorem can be obtained
using result of~\cite{KPS05} but we did it independently. The list
of weights are organized in the following way: for each
representation of primitive poset $\mathcal N=(k_1,\ldots,k_n)$
(which for us is given by generalized dimensional
$d=(d_1^{(1)},\ldots,d_{k_1}^{(1)};\ldots;$
$d_1^{(n)},\ldots,d_{k_n}^{(n)};d_0)$, since this gives a
representation up to isomorphism) we state the condition on weights
$\chi=(\alpha_1,\ldots,\alpha_{k_1};\beta_1,\ldots;\gamma)$, under
which it is possible to unitarize the linear representation.

\begin{thm} For primitive poset $\mathcal N$ its linear representation with
dimension $D$ unitarize with every weight $\chi$ for which
conditions $C$ are satisfied:

\vspace{1cm}

 1. Poset $\mathcal N=(1;1;1)$
\begin{picture}(120,0)
      \put(20,0){\circle{4}}
      \put(40,0){\circle{4}}
      \put(60,0) {\circle{4}}
\end{picture}

\vspace{0.5cm}

\begin{longtable}{p{3.2cm} | p{6cm}}

Dimensions $D$ & Conditions $C$ \\
$(0;0;0;1)$ & $\gamma=0 $ \\ \hline $(0;0;1;1)$ & $\delta=\gamma $ \\
\hline $(0;1;0;1)$ & $\beta=\gamma $ \\ \hline $(0;1;1;1)$ &
$\beta+\delta=\gamma $ \\ \hline $(1;0;0;1)$ & $\alpha=\gamma $ \\
\hline $(1;0;1;1)$ & $\alpha+\delta=\gamma $ \\ \hline $(1;1;0;1)$ &
$\alpha+\beta=\gamma $ \\ \hline $(1;1;1;1)$ &
$\alpha+\beta+\delta=\gamma $ \\ \hline $(1;1;1;2)$ &
$\alpha<\gamma, \beta<\gamma, \delta<\gamma,
\alpha+\beta+\delta=2\gamma $ \\ \hline
\end{longtable}

\vspace{0.5cm}

2. Poset $\mathcal N=(2;1;1)$
\begin{picture}(120,20)
      \put(20,0){\circle{4}}
      \put(20,20){\circle{4}}
      \put(20,2){\vector(0,1){16}}
      \put(40,0){\circle{4}}
      \put(60,0) {\circle{4}}
\end{picture}

\vspace{0.5cm}

\begin{longtable}{p{3.2cm} | p{7cm}}

Dimensions & Conditions \\
$(0,0;0;0;1)$ & $ \gamma=0 $ \\ \hline $(0,1;0;0;1)$ & $\alpha_2=\gamma $ \\
\hline $(1,1;0;0;1)$ & $\alpha_1+\alpha_2=\gamma $ \\ \hline
$(0,0;1;0;1)$ & $\beta=\gamma $ \\ \hline $(0,1;1;0;1)$ &
$\alpha_2+\beta=\gamma $ \\ \hline $(1,1;1;0;1)$ &
$\alpha_1+\alpha_2+\beta=\gamma $ \\ \hline $(0,0;0;1;1)$ &
$\delta=\gamma $ \\ \hline $(0,1;0;1;1)$ & $\alpha_2+\delta=\gamma $
\\ \hline $(1,1;0;1;1)$ & $\alpha_1+\alpha_2+\delta=\gamma $ \\
\hline $(0,0;1;1;1)$ & $\beta+\delta=\gamma $ \\ \hline
$(0,1;1;1;1)$ & $\alpha_2+\beta+\delta=\gamma $ \\ \hline
$(1,1;1;1;1)$ & $\alpha_1+\alpha_2+\beta+\delta=\gamma $ \\ \hline
$(0,1;1;1;2)$ & $\alpha_2<\gamma, \beta<\gamma, \delta<\gamma,
\alpha_2+\beta+\delta=2\gamma $ \\ \hline $(1,1;1;1;2)$ &
$\alpha_1+\alpha_2<\gamma, \alpha_1+\alpha_2<\gamma, \beta<\gamma,
\delta<\gamma, \alpha_1+\alpha_2+\beta+\delta=2\gamma $ \\ \hline
$(1,2;1;1;2)$ & $\alpha_1+\alpha_2<\gamma, \alpha_2+\beta<\gamma,
\alpha_2+\delta<\gamma, \alpha_1+2\alpha_2+\beta+\delta=2\gamma $ \\

\end{longtable}

\vspace{0.5cm}

3. Poset $\mathcal N=(2;2;1)$
\begin{picture}(120,20)
      \put(20,0){\circle{4}}
      \put(20,20){\circle{4}}
      \put(20,2){\vector(0,1){16}}
      \put(40,0){\circle{4}}
      \put(40,20){\circle{4}}
      \put(40,2){\vector(0,1){16}}
      \put(60,0) {\circle{4}}
\end{picture}

\vspace{0.5cm}

\begin{longtable}{p{3.2cm} | p{7cm}}

Dimensions $D$ & Conditions $C$\\
\hline $(0,0;0,0;0;1)$ & $ \gamma=0$ \\ \hline $(0,1;0,0;0;1)$ &
$\alpha_2=\gamma $
\\ \hline $(0,0;0,0;1;1)$ & $\delta=\gamma $ \\ \hline
$(0,1;0,0;1;1)$ & $\alpha_2+\delta=\gamma $ \\ \hline
$(1,1;0,0;1;1)$ & $\alpha_1+\alpha_2+\delta=\gamma $ \\ \hline
$(0,0;0,1;0;1)$ & $\beta_2=\gamma $ \\ \hline $(0,1;0,1;1;1)$ &
$\alpha_2+\beta_2+\delta=\gamma $ \\ \hline $(0,0;1,1;0;1)$ &
$\beta_1+\beta_2=\gamma $ \\ \hline $(0,1;1,1;0;1)$ &
$\alpha_2+\beta_1+\beta_2=\gamma $ \\ \hline $(0,0;0,1;1;1)$ &
$\beta_2+\delta=\gamma $ \\ \hline $(0,0;1,1;1;1)$ &
$\beta_1+\beta_2+\delta=\gamma $ \\ \hline $(0,1;1,1;1;1)$ &
$\alpha_2+\beta_1+\beta_2+\delta=\gamma $ \\ \hline $(1,1;0,0;0;1)$
& $\alpha_1+\alpha_2=\gamma $ \\ \hline $(0,1;0,1;0;1)$ &
$\alpha_2+\beta_2=\gamma $ \\ \hline $(1,1;0,1;0;1)$ &
$\alpha_1+\alpha_2+\beta_2=\gamma $ \\ \hline $(1,1;0,1;1;1)$ &
$\alpha_1+\alpha_2+\beta_2+\delta=\gamma $ \\ \hline $(1,1;1,1;0;1)$
& $\alpha_1+\alpha_2+\beta_1+\beta_2=\gamma $ \\ \hline
$(1,1;1,1;1;1)$ & $\alpha_1+\alpha_2+\beta_1+\beta_2+\delta=\gamma $
\\ \hline $(0,1;0,1;1;2)$ & $\alpha_2<\gamma, \beta_2<\gamma,
\delta<\gamma, \alpha_2+\beta_2+\delta=2\gamma $ \\ \hline
$(1,1;0,1;1;2)$ & $\alpha_1+\alpha_2<\gamma,
\alpha_1+\alpha_2<\gamma, \beta_2<\gamma, \delta<\gamma,
\alpha_1+\alpha_2+\beta_2+\delta=2\gamma $ \\ \hline $(1,2;0,1;1;2)$
& $\alpha_1+\alpha_2<\gamma, \alpha_2+\beta_2<\gamma,
\alpha_2+\delta<\gamma, \alpha_1+2\alpha_2+\beta_2+\delta=2\gamma $
\\ \hline $(0,1;1,1;1;2)$ & $\alpha_2<\gamma,
\beta_1+\beta_2<\gamma, \beta_1+\beta_2<\gamma, \delta<\gamma,
\alpha_2+\beta_1+\beta_2+\delta=2\gamma $ \\ \hline $(1,1;1,1;1;2)$
& $\alpha_1+\alpha_2<\gamma, \alpha_1+\alpha_2<\gamma,
\beta_1+\beta_2<\gamma, \beta_1+\beta_2<\gamma, \delta<\gamma,
\alpha_1+\alpha_2+\beta_1+\beta_2+\delta=2\gamma $ \\ \hline
$(1,2;1,1;1;2)$ & $\alpha_1+\alpha_2<\gamma,
\alpha_2+\beta_1+\beta_2<\gamma, \alpha_2+\beta_1+\beta_2<\gamma,
\alpha_2+\delta<\gamma,
\alpha_1+2\alpha_2+\beta_1+\beta_2+\delta=2\gamma $ \\ \hline
$(0,1;1,2;1;2)$ & $\alpha_2+\beta_2<\gamma, \beta_1+\beta_2<\gamma,
\beta_2+\delta<\gamma, \alpha_2+\beta_1+2\beta_2+\delta=2\gamma $ \\
\hline $(1,1;1,2;1;2)$ & $\alpha_1+\alpha_2+\beta_2<\gamma,
\alpha_1+\alpha_2+\beta_2<\gamma, \beta_1+\beta_2<\gamma,
\beta_2+\delta<\gamma,
\alpha_1+\alpha_2+\beta_1+2\beta_2+\delta=2\gamma $ \\ \hline
$(1,2;1,2;1;2)$ & $\alpha_1+\alpha_2+\beta_2<\gamma,
\alpha_2+\beta_1+\beta_2<\gamma, \alpha_2+\beta_2+\delta<\gamma,
\alpha_1+2\alpha_2+\beta_1+2\beta_2+\delta=2\gamma $ \\ \hline
$(1,2;1,2;1;3)$ & $\alpha_1+\alpha_2<\gamma,
\alpha_1+\alpha_2+\beta_2+\delta<2\gamma, \beta_1+\beta_2<\gamma,
\alpha_2+\beta_1+\beta_2+\delta<2\gamma, \alpha_2+\beta_2<\gamma,
\alpha_1+2\alpha_2+\beta_1+2\beta_2+\delta=3\gamma $ \\ \hline
$(1,2;1,2;2;3)$ & $\beta_2+\delta<\gamma,
\alpha_2+\beta_1+2\beta_2+\delta<2\gamma, \alpha_2+\delta<\gamma,
\alpha_1+2\alpha_2+\beta_2+\delta<2\gamma,
\alpha_1+\alpha_2+\beta_1+\beta_2+\delta<2\gamma,
\alpha_1+2\alpha_2+\beta_1+2\beta_2+2\delta=3\gamma $ \\

\end{longtable}

\vspace{0.5cm}

4. Poset $\mathcal N=(3;2;1)$
\begin{picture}(120,40)
      \put(20,0){\circle{4}}
      \put(20,20){\circle{4}}
      \put(20,40){\circle{4}}
      \put(20,2){\vector(0,1){16}}
      \put(20,22){\vector(0,1){16}}
      \put(40,0){\circle{4}}
      \put(40,20){\circle{4}}
      \put(40,2){\vector(0,1){16}}
      \put(60,0) {\circle{4}}
\end{picture}

\vspace{0.5cm}

\begin{longtable}{p{3.2cm} | p{7cm}}

Dimensions $D$ & Conditions $C$\\

\hline $(0,0,0;0,0;0;1)$ & $ \gamma=0$ \\ \hline $(0,0,1;0,0;0;1)$ &
$\alpha_3=\gamma $ \\ \hline $(1,1,1;0,0;0;1)$ &
$\alpha_1+\alpha_2+\alpha_3=\gamma $ \\ \hline $(0,0,0;0,1;0;1)$ &
$\beta_2=\gamma $ \\ \hline $(0,0,1;0,1;0;1)$ &
$\alpha_3+\beta_2=\gamma $ \\ \hline $(0,1,1;0,0;0;1)$ &
$\alpha_2+\alpha_3=\gamma $ \\ \hline $(0,1,1;0,1;0;1)$ &
$\alpha_2+\alpha_3+\beta_2=\gamma $ \\ \hline $(1,1,1;0,1;0;1)$ &
$\alpha_1+\alpha_2+\alpha_3+\beta_2=\gamma $ \\ \hline
$(0,0,0;1,1;0;1)$ & $\beta_1+\beta_2=\gamma $ \\ \hline
$(0,0,1;1,1;0;1)$ & $\alpha_3+\beta_1+\beta_2=\gamma $ \\ \hline
$(0,1,1;1,1;0;1)$ & $\alpha_2+\alpha_3+\beta_1+\beta_2=\gamma $ \\
\hline $(1,1,1;1,1;0;1)$ &
$\alpha_1+\alpha_2+\alpha_3+\beta_1+\beta_2=\gamma $ \\ \hline
$(0,0,0;0,0;1;1)$ & $\delta=\gamma $ \\ \hline $(0,0,1;0,0;1;1)$ &
$\alpha_3+\delta=\gamma $ \\ \hline $(0,0,0;0,1;1;1)$ &
$\beta_2+\delta=\gamma $ \\ \hline $(0,0,1;0,1;1;1)$ &
$\alpha_3+\beta_2+\delta=\gamma $ \\ \hline $(0,1,1;0,1;1;1)$ &
$\alpha_2+\alpha_3+\beta_2+\delta=\gamma $ \\ \hline
$(1,1,1;0,1;1;1)$ &
$\alpha_1+\alpha_2+\alpha_3+\beta_2+\delta=\gamma $ \\ \hline
$(0,0,0;1,1;1;1)$ & $\beta_1+\beta_2+\delta=\gamma $ \\ \hline
$(0,0,1;1,1;1;1)$ & $\alpha_3+\beta_1+\beta_2+\delta=\gamma $ \\
\hline $(0,1,1;0,0;1;1)$ & $\alpha_2+\alpha_3+\delta=\gamma $ \\
\hline $(0,1,1;1,1;1;1)$ &
$\alpha_2+\alpha_3+\beta_1+\beta_2+\delta=\gamma $ \\ \hline
$(1,1,1;0,0;1;1)$ & $\alpha_1+\alpha_2+\alpha_3+\delta=\gamma $ \\
\hline $(1,1,1;1,1;1;1)$ &
$\alpha_1+\alpha_2+\alpha_3+\beta_1+\beta_2+\delta=\gamma $ \\
\hline $(0,0,1;0,1;1;2)$ & $\alpha_3<\gamma, \beta_2<\gamma,
\delta<\gamma, \alpha_3+\beta_2+\delta=2\gamma $ \\ \hline
$(0,1,1;0,1;1;2)$ & $\alpha_2+\alpha_3<\gamma,
\alpha_2+\alpha_3<\gamma, \beta_2<\gamma, \delta<\gamma,
\alpha_2+\alpha_3+\beta_2+\delta=2\gamma $ \\ \hline
$(0,1,2;0,1;1;2)$ & $\alpha_2+\alpha_3<\gamma,
\alpha_3+\beta_2<\gamma, \alpha_3+\delta<\gamma,
\alpha_2+2\alpha_3+\beta_2+\delta=2\gamma $ \\ \hline
$(0,0,1;1,1;1;2)$ & $\alpha_3<\gamma, \beta_1+\beta_2<\gamma,
\beta_1+\beta_2<\gamma, \delta<\gamma,
\alpha_3+\beta_1+\beta_2+\delta=2\gamma $ \\ \hline
$(0,1,1;1,1;1;2)$ & $\alpha_2+\alpha_3<\gamma,
\alpha_2+\alpha_3<\gamma, \beta_1+\beta_2<\gamma,
\beta_1+\beta_2<\gamma, \delta<\gamma,
\alpha_2+\alpha_3+\beta_1+\beta_2+\delta=2\gamma $ \\ \hline
$(0,1,2;1,1;1;2)$ & $\alpha_2+\alpha_3<\gamma,
\alpha_3+\beta_1+\beta_2<\gamma, \alpha_3+\beta_1+\beta_2<\gamma,
\alpha_3+\delta<\gamma,
\alpha_2+2\alpha_3+\beta_1+\beta_2+\delta=2\gamma $ \\ \hline
$(0,1,2;1,2;1;2)$ & $\alpha_2+\alpha_3+\beta_2<\gamma,
\alpha_3+\beta_1+\beta_2<\gamma, \alpha_3+\beta_2+\delta<\gamma,
\alpha_2+2\alpha_3+\beta_1+2\beta_2+\delta=2\gamma $ \\ \hline
$(1,1,2;0,1;1;2)$ & $\alpha_1+\alpha_2+\alpha_3<\gamma,
\alpha_1+\alpha_2+\alpha_3<\gamma, \alpha_3+\beta_2<\gamma,
\alpha_3+\delta<\gamma,
\alpha_1+\alpha_2+2\alpha_3+\beta_2+\delta=2\gamma $ \\ \hline
$(1,1,2;1,1;1;2)$ & $\alpha_1+\alpha_2+\alpha_3<\gamma,
\alpha_1+\alpha_2+\alpha_3<\gamma, \alpha_3+\beta_1+\beta_2<\gamma,
\alpha_3+\beta_1+\beta_2<\gamma, \alpha_3+\delta<\gamma,
\alpha_1+\alpha_2+2\alpha_3+\beta_1+\beta_2+\delta=2\gamma $ \\
\hline $(0,0,1;1,2;1;2)$ & $\alpha_3+\beta_2<\gamma,
\beta_1+\beta_2<\gamma, \beta_2+\delta<\gamma,
\alpha_3+\beta_1+2\beta_2+\delta=2\gamma $ \\ \hline
$(0,1,1;1,2;1;2)$ & $\alpha_2+\alpha_3+\beta_2<\gamma,
\alpha_2+\alpha_3+\beta_2<\gamma, \beta_1+\beta_2<\gamma,
\beta_2+\delta<\gamma,
\alpha_2+\alpha_3+\beta_1+2\beta_2+\delta=2\gamma $ \\ \hline
$(1,1,1;0,1;1;2)$ & $\alpha_1+\alpha_2+\alpha_3<\gamma,
\alpha_1+\alpha_2+\alpha_3<\gamma,
\alpha_1+\alpha_2+\alpha_3<\gamma, \beta_2<\gamma, \delta<\gamma,
\alpha_1+\alpha_2+\alpha_3+\beta_2+\delta=2\gamma $ \\ \hline
$(1,1,1;1,1;1;2)$ & $\alpha_1+\alpha_2+\alpha_3<\gamma,
\alpha_1+\alpha_2+\alpha_3<\gamma,
\alpha_1+\alpha_2+\alpha_3<\gamma, \beta_1+\beta_2<\gamma,
\beta_1+\beta_2<\gamma, \delta<\gamma,
\alpha_1+\alpha_2+\alpha_3+\beta_1+\beta_2+\delta=2\gamma $ \\
\hline $(1,1,1;1,2;1;2)$ &
$\alpha_1+\alpha_2+\alpha_3+\beta_2<\gamma,
\alpha_1+\alpha_2+\alpha_3+\beta_2<\gamma,
\alpha_1+\alpha_2+\alpha_3+\beta_2<\gamma, \beta_1+\beta_2<\gamma,
\beta_2+\delta<\gamma,
\alpha_1+\alpha_2+\alpha_3+\beta_1+2\beta_2+\delta=2\gamma $ \\
\hline $(1,1,2;1,2;1;2)$ &
$\alpha_1+\alpha_2+\alpha_3+\beta_2<\gamma,
\alpha_1+\alpha_2+\alpha_3+\beta_2<\gamma,
\alpha_3+\beta_1+\beta_2<\gamma, \alpha_3+\beta_2+\delta<\gamma,
\alpha_1+\alpha_2+2\alpha_3+\beta_1+2\beta_2+\delta=2\gamma $ \\
\hline $(1,2,2;0,1;1;2)$ & $\alpha_1+\alpha_2+\alpha_3<\gamma,
\alpha_2+\alpha_3+\beta_2<\gamma, \alpha_2+\alpha_3+\delta<\gamma,
\alpha_1+2\alpha_2+2\alpha_3+\beta_2+\delta=2\gamma $ \\ \hline
$(1,2,2;1,1;1;2)$ & $\alpha_1+\alpha_2+\alpha_3<\gamma,
\alpha_2+\alpha_3+\beta_1+\beta_2<\gamma,
\alpha_2+\alpha_3+\beta_1+\beta_2<\gamma,
\alpha_2+\alpha_3+\delta<\gamma,
\alpha_1+2\alpha_2+2\alpha_3+\beta_1+\beta_2+\delta=2\gamma $ \\
\hline $(1,2,2;1,2;1;2)$ &
$\alpha_1+\alpha_2+\alpha_3+\beta_2<\gamma,
\alpha_2+\alpha_3+\beta_1+\beta_2<\gamma,
\alpha_2+\alpha_3+\beta_2+\delta<\gamma,
\alpha_1+2\alpha_2+2\alpha_3+\beta_1+2\beta_2+\delta=2\gamma $ \\
\hline $(0,1,2;1,2;1;3)$ & $\alpha_2+\alpha_3<\gamma,
\alpha_2+\alpha_3+\beta_2+\delta<2\gamma, \beta_1+\beta_2<\gamma,
\alpha_3+\beta_1+\beta_2+\delta<2\gamma, \alpha_3+\beta_2<\gamma,
\alpha_2+2\alpha_3+\beta_1+2\beta_2+\delta=3\gamma $ \\ \hline
$(1,1,2;1,2;1;3)$ & $\alpha_1+\alpha_2+\alpha_3<\gamma,
\alpha_1+\alpha_2+\alpha_3<\gamma,
\alpha_1+\alpha_2+\alpha_3+\beta_2+\delta<2\gamma,
\beta_1+\beta_2<\gamma, \alpha_3+\beta_1+\beta_2+\delta<2\gamma,
\alpha_3+\beta_2<\gamma,
\alpha_1+\alpha_2+2\alpha_3+\beta_1+2\beta_2+\delta=3\gamma $ \\
\hline $(1,2,2;1,2;1;3)$ & $\alpha_1+\alpha_2+\alpha_3<\gamma,
\alpha_1+\alpha_2+\alpha_3+\beta_2+\delta<2\gamma,
\alpha_1+\alpha_2+\alpha_3+\beta_2+\delta<2\gamma,
\beta_1+\beta_2<\gamma,
\alpha_2+\alpha_3+\beta_1+\beta_2+\delta<2\gamma,
\alpha_2+\alpha_3+\beta_2<\gamma,
\alpha_1+2\alpha_2+2\alpha_3+\beta_1+2\beta_2+\delta=3\gamma $ \\
\hline $(1,2,3;1,2;1;3)$ & $\alpha_1+\alpha_2+\alpha_3<\gamma,
\alpha_1+\alpha_2+2\alpha_3+\beta_2+\delta<2\gamma,
\alpha_3+\beta_1+\beta_2<\gamma,
\alpha_2+2\alpha_3+\beta_1+\beta_2+\delta<2\gamma,
\alpha_2+\alpha_3+\beta_2<\gamma,
\alpha_1+2\alpha_2+3\alpha_3+\beta_1+2\beta_2+\delta=3\gamma $ \\
\hline $(0,1,2;1,2;2;3)$ & $\beta_2+\delta<\gamma,
\alpha_3+\beta_1+2\beta_2+\delta<2\gamma, \alpha_3+\delta<\gamma,
\alpha_2+2\alpha_3+\beta_2+\delta<2\gamma,
\alpha_2+\alpha_3+\beta_1+\beta_2+\delta<2\gamma,
\alpha_2+2\alpha_3+\beta_1+2\beta_2+2\delta=3\gamma $ \\ \hline
$(1,1,2;1,2;2;3)$ & $\beta_2+\delta<\gamma, \beta_2+\delta<\gamma,
\alpha_3+\beta_1+2\beta_2+\delta<2\gamma, \alpha_3+\delta<\gamma,
\alpha_1+\alpha_2+2\alpha_3+\beta_2+\delta<2\gamma,
\alpha_1+\alpha_2+\alpha_3+\beta_1+\beta_2+\delta<2\gamma,
\alpha_1+\alpha_2+2\alpha_3+\beta_1+2\beta_2+2\delta=3\gamma $ \\
\hline $(1,2,2;1,2;2;3)$ & $\beta_2+\delta<\gamma,
\alpha_2+\alpha_3+\beta_1+2\beta_2+\delta<2\gamma,
\alpha_2+\alpha_3+\beta_1+2\beta_2+\delta<2\gamma,
\alpha_2+\alpha_3+\delta<\gamma,
\alpha_1+2\alpha_2+2\alpha_3+\beta_2+\delta<2\gamma,
\alpha_1+\alpha_2+\alpha_3+\beta_1+\beta_2+\delta<2\gamma,
\alpha_1+2\alpha_2+2\alpha_3+\beta_1+2\beta_2+2\delta=3\gamma $ \\
\hline $(1,2,3;1,2;2;3)$ & $\alpha_3+\beta_2+\delta<\gamma,
\alpha_2+2\alpha_3+\beta_1+2\beta_2+\delta<2\gamma,
\alpha_2+\alpha_3+\delta<\gamma,
\alpha_1+2\alpha_2+2\alpha_3+\beta_2+\delta<2\gamma,
\alpha_1+\alpha_2+2\alpha_3+\beta_1+\beta_2+\delta<2\gamma,
\alpha_1+2\alpha_2+3\alpha_3+\beta_1+2\beta_2+2\delta=3\gamma $ \\
\hline $(1,2,3;1,2;2;4)$ & $\alpha_1+\alpha_2+\alpha_3<\gamma,
\alpha_1+\alpha_2+\alpha_3+\beta_2+\delta<2\gamma,
\alpha_1+\alpha_2+2\alpha_3+\beta_1+2\beta_2+\delta<3\gamma,
\alpha_3+\delta<\gamma, \alpha_2+2\alpha_3+\beta_2+\delta<2\gamma,
\alpha_2+\alpha_3+\beta_1+\beta_2+\delta<2\gamma,
\alpha_1+2\alpha_2+3\alpha_3+\beta_1+2\beta_2+2\delta=4\gamma $ \\
\hline $(1,2,3;1,3;2;4)$ & $\beta_2+\delta<\gamma,
\alpha_3+\beta_1+2\beta_2+\delta<2\gamma,
\alpha_2+2\alpha_3+\beta_1+2\beta_2+2\delta<3\gamma,
\alpha_2+\alpha_3+\beta_2<\gamma,
\alpha_1+2\alpha_2+2\alpha_3+\beta_1+2\beta_2+\delta<3\gamma,
\alpha_1+\alpha_2+2\alpha_3+\beta_2+\delta<2\gamma,
\alpha_1+2\alpha_2+3\alpha_3+\beta_1+3\beta_2+2\delta=4\gamma $ \\
\hline $(1,2,3;2,3;2;4)$ & $\alpha_3+\beta_1+\beta_2<\gamma,
\alpha_2+2\alpha_3+\beta_1+\beta_2+\delta<2\gamma,
\alpha_1+2\alpha_2+3\alpha_3+\beta_1+2\beta_2+\delta<3\gamma,
\alpha_1+\alpha_2+\alpha_3+\beta_1+\beta_2+\delta<2\gamma,
\alpha_1+\alpha_2+2\alpha_3+\beta_1+2\beta_2+2\delta<3\gamma,
\alpha_2+\alpha_3+\beta_1+2\beta_2+\delta<2\gamma,
\alpha_1+2\alpha_2+3\alpha_3+2\beta_1+3\beta_2+2\delta=4\gamma $
\\

\end{longtable}

5. Poset $\mathcal N=(4;2;1)$
\begin{picture}(120,60)
      \put(20,0){\circle{4}}
      \put(20,20){\circle{4}}
      \put(20,40){\circle{4}}
      \put(20,60){\circle{4}}
      \put(20,2){\vector(0,1){16}}
      \put(20,22){\vector(0,1){16}}
      \put(20,42){\vector(0,1){16}}
      \put(40,0){\circle{4}}
      \put(40,20){\circle{4}}
      \put(40,2){\vector(0,1){16}}
      \put(60,0) {\circle{4}}
\end{picture}

\vspace{0.5cm}

\begin{longtable}{p{3.2cm} | p{7cm}}

Dimensions $D$ & Conditions $C$\\
\hline $(0,0,0,0;0,0;0;1)$ & $\gamma=0 $ \\ \hline
$(1,1,1,1;0,0;1;1)$ &
$\alpha_1+\alpha_2+\alpha_3+\alpha_4+\delta=\gamma $ \\ \hline
$(0,0,0,0;0,1;0;1)$ & $\beta_2=\gamma $ \\ \hline
$(0,0,0,0;1,1;1;1)$ & $\beta_1+\beta_2+\delta=\gamma $ \\ \hline
$(0,0,0,1;0,0;0;1)$ & $\alpha_4=\gamma $ \\ \hline
$(0,0,0,0;0,0;1;1)$ & $\delta=\gamma $ \\ \hline $(0,0,0,1;0,0;1;1)$
& $\alpha_4+\delta=\gamma $ \\ \hline $(0,0,0,0;0,1;1;1)$ &
$\beta_2+\delta=\gamma $ \\ \hline $(0,0,0,1;0,1;0;1)$ &
$\alpha_4+\beta_2=\gamma $ \\ \hline $(0,0,0,1;0,1;1;1)$ &
$\alpha_4+\beta_2+\delta=\gamma $ \\ \hline $(0,0,0,0;1,1;0;1)$ &
$\beta_1+\beta_2=\gamma $ \\ \hline $(0,0,0,1;1,1;0;1)$ &
$\alpha_4+\beta_1+\beta_2=\gamma $ \\ \hline $(0,0,0,1;1,1;1;1)$ &
$\alpha_4+\beta_1+\beta_2+\delta=\gamma $ \\ \hline
$(0,0,1,1;0,0;0;1)$ & $\alpha_3+\alpha_4=\gamma $ \\ \hline
$(0,0,1,1;0,0;1;1)$ & $\alpha_3+\alpha_4+\delta=\gamma $ \\ \hline
$(0,0,1,1;1,1;1;1)$ &
$\alpha_3+\alpha_4+\beta_1+\beta_2+\delta=\gamma $ \\ \hline
$(0,1,1,1;0,0;1;1)$ & $\alpha_2+\alpha_3+\alpha_4+\delta=\gamma $ \\
\hline $(0,0,1,1;0,1;1;1)$ &
$\alpha_3+\alpha_4+\beta_2+\delta=\gamma $ \\ \hline
$(0,1,1,1;0,1;1;1)$ &
$\alpha_2+\alpha_3+\alpha_4+\beta_2+\delta=\gamma $ \\ \hline
$(0,1,1,1;1,1;1;1)$ &
$\alpha_2+\alpha_3+\alpha_4+\beta_1+\beta_2+\delta=\gamma $ \\
\hline $(1,1,1,1;0,1;1;1)$ &
$\alpha_1+\alpha_2+\alpha_3+\alpha_4+\beta_2+\delta=\gamma $ \\
\hline $(0,0,1,1;1,1;0;1)$ &
$\alpha_3+\alpha_4+\beta_1+\beta_2=\gamma $ \\ \hline
$(0,1,1,1;0,0;0;1)$ & $\alpha_2+\alpha_3+\alpha_4=\gamma $ \\ \hline
$(0,0,1,1;0,1;0;1)$ & $\alpha_3+\alpha_4+\beta_2=\gamma $ \\ \hline
$(0,1,1,1;0,1;0;1)$ & $\alpha_2+\alpha_3+\alpha_4+\beta_2=\gamma $
\\ \hline $(0,1,1,1;1,1;0;1)$ &
$\alpha_2+\alpha_3+\alpha_4+\beta_1+\beta_2=\gamma $ \\ \hline
$(1,1,1,1;0,0;0;1)$ & $\alpha_1+\alpha_2+\alpha_3+\alpha_4=\gamma $
\\ \hline $(1,1,1,1;0,1;0;1)$ &
$\alpha_1+\alpha_2+\alpha_3+\alpha_4+\beta_2=\gamma $ \\ \hline
$(1,1,1,1;1,1;0;1)$ &
$\alpha_1+\alpha_2+\alpha_3+\alpha_4+\beta_1+\beta_2=\gamma $ \\
\hline $(1,1,1,1;1,1;1;1)$ &
$\alpha_1+\alpha_2+\alpha_3+\alpha_4+\beta_1+\beta_2+\delta=\gamma $
\\ \hline $(0,0,0,1;0,1;1;2)$ & $\alpha_4<\gamma, \beta_2<\gamma,
\delta<\gamma, \alpha_4+\beta_2+\delta=2\gamma $ \\ \hline
$(0,0,1,1;0,1;1;2)$ & $\alpha_3+\alpha_4<\gamma,
\alpha_3+\alpha_4<\gamma, \beta_2<\gamma, \delta<\gamma,
\alpha_3+\alpha_4+\beta_2+\delta=2\gamma $ \\ \hline
$(1,1,1,1;0,1;1;2)$ & $\alpha_1+\alpha_2+\alpha_3+\alpha_4<\gamma,
\alpha_1+\alpha_2+\alpha_3+\alpha_4<\gamma,
\alpha_1+\alpha_2+\alpha_3+\alpha_4<\gamma,
\alpha_1+\alpha_2+\alpha_3+\alpha_4<\gamma, \beta_2<\gamma,
\delta<\gamma,
\alpha_1+\alpha_2+\alpha_3+\alpha_4+\beta_2+\delta=2\gamma $ \\
\hline $(0,0,1,2;0,1;1;2)$ & $\alpha_3+\alpha_4<\gamma,
\alpha_4+\beta_2<\gamma, \alpha_4+\delta<\gamma,
\alpha_3+2\alpha_4+\beta_2+\delta=2\gamma $ \\ \hline
$(0,1,1,1;0,1;1;2)$ & $\alpha_2+\alpha_3+\alpha_4<\gamma,
\alpha_2+\alpha_3+\alpha_4<\gamma,
\alpha_2+\alpha_3+\alpha_4<\gamma, \beta_2<\gamma, \delta<\gamma,
\alpha_2+\alpha_3+\alpha_4+\beta_2+\delta=2\gamma $ \\ \hline
$(1,1,1,2;0,1;1;2)$ & $\alpha_1+\alpha_2+\alpha_3+\alpha_4<\gamma,
\alpha_1+\alpha_2+\alpha_3+\alpha_4<\gamma,
\alpha_1+\alpha_2+\alpha_3+\alpha_4<\gamma, \alpha_4+\beta_2<\gamma,
\alpha_4+\delta<\gamma,
\alpha_1+\alpha_2+\alpha_3+2\alpha_4+\beta_2+\delta=2\gamma $ \\
\hline $(1,1,2,2;0,1;1;2)$ &
$\alpha_1+\alpha_2+\alpha_3+\alpha_4<\gamma,
\alpha_1+\alpha_2+\alpha_3+\alpha_4<\gamma,
\alpha_3+\alpha_4+\beta_2<\gamma, \alpha_3+\alpha_4+\delta<\gamma,
\alpha_1+\alpha_2+2\alpha_3+2\alpha_4+\beta_2+\delta=2\gamma $ \\
\hline $(1,2,2,2;0,1;1;2)$ &
$\alpha_1+\alpha_2+\alpha_3+\alpha_4<\gamma,
\alpha_2+\alpha_3+\alpha_4+\beta_2<\gamma,
\alpha_2+\alpha_3+\alpha_4+\delta<\gamma,
\alpha_1+2\alpha_2+2\alpha_3+2\alpha_4+\beta_2+\delta=2\gamma $ \\
\hline $(0,0,0,1;1,1;1;2)$ & $\alpha_4<\gamma,
\beta_1+\beta_2<\gamma, \beta_1+\beta_2<\gamma, \delta<\gamma,
\alpha_4+\beta_1+\beta_2+\delta=2\gamma $ \\ \hline
$(0,0,1,1;1,1;1;2)$ & $\alpha_3+\alpha_4<\gamma,
\alpha_3+\alpha_4<\gamma, \beta_1+\beta_2<\gamma,
\beta_1+\beta_2<\gamma, \delta<\gamma,
\alpha_3+\alpha_4+\beta_1+\beta_2+\delta=2\gamma $ \\ \hline
$(0,0,1,2;1,1;1;2)$ & $\alpha_3+\alpha_4<\gamma,
\alpha_4+\beta_1+\beta_2<\gamma, \alpha_4+\beta_1+\beta_2<\gamma,
\alpha_4+\delta<\gamma,
\alpha_3+2\alpha_4+\beta_1+\beta_2+\delta=2\gamma $ \\ \hline
$(0,1,1,1;1,1;1;2)$ & $\alpha_2+\alpha_3+\alpha_4<\gamma,
\alpha_2+\alpha_3+\alpha_4<\gamma,
\alpha_2+\alpha_3+\alpha_4<\gamma, \beta_1+\beta_2<\gamma,
\beta_1+\beta_2<\gamma, \delta<\gamma,
\alpha_2+\alpha_3+\alpha_4+\beta_1+\beta_2+\delta=2\gamma $ \\
\hline $(1,1,1,1;1,1;1;2)$ &
$\alpha_1+\alpha_2+\alpha_3+\alpha_4<\gamma,
\alpha_1+\alpha_2+\alpha_3+\alpha_4<\gamma,
\alpha_1+\alpha_2+\alpha_3+\alpha_4<\gamma,
\alpha_1+\alpha_2+\alpha_3+\alpha_4<\gamma, \beta_1+\beta_2<\gamma,
\beta_1+\beta_2<\gamma, \delta<\gamma,
\alpha_1+\alpha_2+\alpha_3+\alpha_4+\beta_1+\beta_2+\delta=2\gamma $
\\ \hline $(1,1,1,2;1,1;1;2)$ &
$\alpha_1+\alpha_2+\alpha_3+\alpha_4<\gamma,
\alpha_1+\alpha_2+\alpha_3+\alpha_4<\gamma,
\alpha_1+\alpha_2+\alpha_3+\alpha_4<\gamma,
\alpha_4+\beta_1+\beta_2<\gamma, \alpha_4+\beta_1+\beta_2<\gamma,
\alpha_4+\delta<\gamma,
\alpha_1+\alpha_2+\alpha_3+2\alpha_4+\beta_1+\beta_2+\delta=2\gamma
$ \\ \hline $(0,1,2,2;0,1;1;2)$ &
$\alpha_2+\alpha_3+\alpha_4<\gamma,
\alpha_3+\alpha_4+\beta_2<\gamma, \alpha_3+\alpha_4+\delta<\gamma,
\alpha_2+2\alpha_3+2\alpha_4+\beta_2+\delta=2\gamma $ \\ \hline
$(0,1,2,2;1,1;1;2)$ & $\alpha_2+\alpha_3+\alpha_4<\gamma,
\alpha_3+\alpha_4+\beta_1+\beta_2<\gamma,
\alpha_3+\alpha_4+\beta_1+\beta_2<\gamma,
\alpha_3+\alpha_4+\delta<\gamma,
\alpha_2+2\alpha_3+2\alpha_4+\beta_1+\beta_2+\delta=2\gamma $ \\
\hline $(1,1,2,2;1,1;1;2)$ &
$\alpha_1+\alpha_2+\alpha_3+\alpha_4<\gamma,
\alpha_1+\alpha_2+\alpha_3+\alpha_4<\gamma,
\alpha_3+\alpha_4+\beta_1+\beta_2<\gamma,
\alpha_3+\alpha_4+\beta_1+\beta_2<\gamma,
\alpha_3+\alpha_4+\delta<\gamma,
\alpha_1+\alpha_2+2\alpha_3+2\alpha_4+\beta_1+\beta_2+\delta=2\gamma
$ \\ \hline $(1,2,2,2;1,1;1;2)$ &
$\alpha_1+\alpha_2+\alpha_3+\alpha_4<\gamma,
\alpha_2+\alpha_3+\alpha_4+\beta_1+\beta_2<\gamma,
\alpha_2+\alpha_3+\alpha_4+\beta_1+\beta_2<\gamma,
\alpha_2+\alpha_3+\alpha_4+\delta<\gamma,
\alpha_1+2\alpha_2+2\alpha_3+2\alpha_4+\beta_1+\beta_2+\delta=2\gamma
$ \\ \hline $(0,0,0,1;1,2;1;2)$ & $\alpha_4+\beta_2<\gamma,
\beta_1+\beta_2<\gamma, \beta_2+\delta<\gamma,
\alpha_4+\beta_1+2\beta_2+\delta=2\gamma $ \\ \hline
$(0,0,1,1;1,2;1;2)$ & $\alpha_3+\alpha_4+\beta_2<\gamma,
\alpha_3+\alpha_4+\beta_2<\gamma, \beta_1+\beta_2<\gamma,
\beta_2+\delta<\gamma,
\alpha_3+\alpha_4+\beta_1+2\beta_2+\delta=2\gamma $ \\ \hline
$(0,0,1,2;1,2;1;2)$ & $\alpha_3+\alpha_4+\beta_2<\gamma,
\alpha_4+\beta_1+\beta_2<\gamma, \alpha_4+\beta_2+\delta<\gamma,
\alpha_3+2\alpha_4+\beta_1+2\beta_2+\delta=2\gamma $ \\ \hline
$(0,1,1,2;0,1;1;2)$ & $\alpha_2+\alpha_3+\alpha_4<\gamma,
\alpha_2+\alpha_3+\alpha_4<\gamma, \alpha_4+\beta_2<\gamma,
\alpha_4+\delta<\gamma,
\alpha_2+\alpha_3+2\alpha_4+\beta_2+\delta=2\gamma $ \\ \hline
$(0,1,1,2;1,1;1;2)$ & $\alpha_2+\alpha_3+\alpha_4<\gamma,
\alpha_2+\alpha_3+\alpha_4<\gamma, \alpha_4+\beta_1+\beta_2<\gamma,
\alpha_4+\beta_1+\beta_2<\gamma, \alpha_4+\delta<\gamma,
\alpha_2+\alpha_3+2\alpha_4+\beta_1+\beta_2+\delta=2\gamma $ \\
\hline $(0,1,1,1;1,2;1;2)$ &
$\alpha_2+\alpha_3+\alpha_4+\beta_2<\gamma,
\alpha_2+\alpha_3+\alpha_4+\beta_2<\gamma,
\alpha_2+\alpha_3+\alpha_4+\beta_2<\gamma, \beta_1+\beta_2<\gamma,
\beta_2+\delta<\gamma,
\alpha_2+\alpha_3+\alpha_4+\beta_1+2\beta_2+\delta=2\gamma $ \\
\hline $(0,1,1,2;1,2;1;2)$ &
$\alpha_2+\alpha_3+\alpha_4+\beta_2<\gamma,
\alpha_2+\alpha_3+\alpha_4+\beta_2<\gamma,
\alpha_4+\beta_1+\beta_2<\gamma, \alpha_4+\beta_2+\delta<\gamma,
\alpha_2+\alpha_3+2\alpha_4+\beta_1+2\beta_2+\delta=2\gamma $ \\
\hline $(1,1,1,2;1,2;1;2)$ &
$\alpha_1+\alpha_2+\alpha_3+\alpha_4+\beta_2<\gamma,
\alpha_1+\alpha_2+\alpha_3+\alpha_4+\beta_2<\gamma,
\alpha_1+\alpha_2+\alpha_3+\alpha_4+\beta_2<\gamma,
\alpha_4+\beta_1+\beta_2<\gamma, \alpha_4+\beta_2+\delta<\gamma,
\alpha_1+\alpha_2+\alpha_3+2\alpha_4+\beta_1+2\beta_2+\delta=2\gamma
$ \\ \hline $(0,1,2,2;1,2;1;2)$ &
$\alpha_2+\alpha_3+\alpha_4+\beta_2<\gamma,
\alpha_3+\alpha_4+\beta_1+\beta_2<\gamma,
\alpha_3+\alpha_4+\beta_2+\delta<\gamma,
\alpha_2+2\alpha_3+2\alpha_4+\beta_1+2\beta_2+\delta=2\gamma $ \\
\hline $(1,1,1,1;1,2;1;2)$ &
$\alpha_1+\alpha_2+\alpha_3+\alpha_4+\beta_2<\gamma,
\alpha_1+\alpha_2+\alpha_3+\alpha_4+\beta_2<\gamma,
\alpha_1+\alpha_2+\alpha_3+\alpha_4+\beta_2<\gamma,
\alpha_1+\alpha_2+\alpha_3+\alpha_4+\beta_2<\gamma,
\beta_1+\beta_2<\gamma, \beta_2+\delta<\gamma,
\alpha_1+\alpha_2+\alpha_3+\alpha_4+\beta_1+2\beta_2+\delta=2\gamma
$ \\ \hline $(1,1,2,2;1,2;1;2)$ &
$\alpha_1+\alpha_2+\alpha_3+\alpha_4+\beta_2<\gamma,
\alpha_1+\alpha_2+\alpha_3+\alpha_4+\beta_2<\gamma,
\alpha_3+\alpha_4+\beta_1+\beta_2<\gamma,
\alpha_3+\alpha_4+\beta_2+\delta<\gamma,
\alpha_1+\alpha_2+2\alpha_3+2\alpha_4+\beta_1+2\beta_2+\delta=2\gamma
$ \\ \hline $(1,2,2,2;1,2;1;2)$ &
$\alpha_1+\alpha_2+\alpha_3+\alpha_4+\beta_2<\gamma,
\alpha_2+\alpha_3+\alpha_4+\beta_1+\beta_2<\gamma,
\alpha_2+\alpha_3+\alpha_4+\beta_2+\delta<\gamma,
\alpha_1+2\alpha_2+2\alpha_3+2\alpha_4+\beta_1+2\beta_2+\delta=2\gamma
$ \\ \hline $(0,0,1,2;1,2;1;3)$ & $\alpha_3+\alpha_4<\gamma,
\alpha_3+\alpha_4+\beta_2+\delta<2\gamma, \beta_1+\beta_2<\gamma,
\alpha_4+\beta_1+\beta_2+\delta<2\gamma, \alpha_4+\beta_2<\gamma,
\alpha_3+2\alpha_4+\beta_1+2\beta_2+\delta=3\gamma $ \\ \hline
$(0,1,1,2;1,2;1;3)$ & $\alpha_2+\alpha_3+\alpha_4<\gamma,
\alpha_2+\alpha_3+\alpha_4<\gamma,
\alpha_2+\alpha_3+\alpha_4+\beta_2+\delta<2\gamma,
\beta_1+\beta_2<\gamma, \alpha_4+\beta_1+\beta_2+\delta<2\gamma,
\alpha_4+\beta_2<\gamma,
\alpha_2+\alpha_3+2\alpha_4+\beta_1+2\beta_2+\delta=3\gamma $ \\
\hline $(0,1,2,2;1,2;1;3)$ & $\alpha_2+\alpha_3+\alpha_4<\gamma,
\alpha_2+\alpha_3+\alpha_4+\beta_2+\delta<2\gamma,
\alpha_2+\alpha_3+\alpha_4+\beta_2+\delta<2\gamma,
\beta_1+\beta_2<\gamma,
\alpha_3+\alpha_4+\beta_1+\beta_2+\delta<2\gamma,
\alpha_3+\alpha_4+\beta_2<\gamma,
\alpha_2+2\alpha_3+2\alpha_4+\beta_1+2\beta_2+\delta=3\gamma $ \\
\hline $(0,0,1,2;1,2;2;3)$ & $\beta_2+\delta<\gamma,
\alpha_4+\beta_1+2\beta_2+\delta<2\gamma, \alpha_4+\delta<\gamma,
\alpha_3+2\alpha_4+\beta_2+\delta<2\gamma,
\alpha_3+\alpha_4+\beta_1+\beta_2+\delta<2\gamma,
\alpha_3+2\alpha_4+\beta_1+2\beta_2+2\delta=3\gamma $ \\ \hline
$(0,1,1,2;1,2;2;3)$ & $\beta_2+\delta<\gamma, \beta_2+\delta<\gamma,
\alpha_4+\beta_1+2\beta_2+\delta<2\gamma, \alpha_4+\delta<\gamma,
\alpha_2+\alpha_3+2\alpha_4+\beta_2+\delta<2\gamma,
\alpha_2+\alpha_3+\alpha_4+\beta_1+\beta_2+\delta<2\gamma,
\alpha_2+\alpha_3+2\alpha_4+\beta_1+2\beta_2+2\delta=3\gamma $ \\
\hline $(0,1,2,2;1,2;2;3)$ & $\beta_2+\delta<\gamma,
\alpha_3+\alpha_4+\beta_1+2\beta_2+\delta<2\gamma,
\alpha_3+\alpha_4+\beta_1+2\beta_2+\delta<2\gamma,
\alpha_3+\alpha_4+\delta<\gamma,
\alpha_2+2\alpha_3+2\alpha_4+\beta_2+\delta<2\gamma,
\alpha_2+\alpha_3+\alpha_4+\beta_1+\beta_2+\delta<2\gamma,
\alpha_2+2\alpha_3+2\alpha_4+\beta_1+2\beta_2+2\delta=3\gamma $ \\
\hline $(0,1,2,3;1,2;1;3)$ & $\alpha_2+\alpha_3+\alpha_4<\gamma,
\alpha_2+\alpha_3+2\alpha_4+\beta_2+\delta<2\gamma,
\alpha_4+\beta_1+\beta_2<\gamma,
\alpha_3+2\alpha_4+\beta_1+\beta_2+\delta<2\gamma,
\alpha_3+\alpha_4+\beta_2<\gamma,
\alpha_2+2\alpha_3+3\alpha_4+\beta_1+2\beta_2+\delta=3\gamma $ \\
\hline $(0,1,2,3;1,2;2;3)$ & $\alpha_4+\beta_2+\delta<\gamma,
\alpha_3+2\alpha_4+\beta_1+2\beta_2+\delta<2\gamma,
\alpha_3+\alpha_4+\delta<\gamma,
\alpha_2+2\alpha_3+2\alpha_4+\beta_2+\delta<2\gamma,
\alpha_2+\alpha_3+2\alpha_4+\beta_1+\beta_2+\delta<2\gamma,
\alpha_2+2\alpha_3+3\alpha_4+\beta_1+2\beta_2+2\delta=3\gamma $ \\
\hline $(1,1,1,2;1,2;1;3)$ &
$\alpha_1+\alpha_2+\alpha_3+\alpha_4<\gamma,
\alpha_1+\alpha_2+\alpha_3+\alpha_4<\gamma,
\alpha_1+\alpha_2+\alpha_3+\alpha_4<\gamma,
\alpha_1+\alpha_2+\alpha_3+\alpha_4+\beta_2+\delta<2\gamma,
\beta_1+\beta_2<\gamma, \alpha_4+\beta_1+\beta_2+\delta<2\gamma,
\alpha_4+\beta_2<\gamma,
\alpha_1+\alpha_2+\alpha_3+2\alpha_4+\beta_1+2\beta_2+\delta=3\gamma
$ \\ \hline $(1,1,2,2;1,2;1;3)$ &
$\alpha_1+\alpha_2+\alpha_3+\alpha_4<\gamma,
\alpha_1+\alpha_2+\alpha_3+\alpha_4<\gamma,
\alpha_1+\alpha_2+\alpha_3+\alpha_4+\beta_2+\delta<2\gamma,
\alpha_1+\alpha_2+\alpha_3+\alpha_4+\beta_2+\delta<2\gamma,
\beta_1+\beta_2<\gamma,
\alpha_3+\alpha_4+\beta_1+\beta_2+\delta<2\gamma,
\alpha_3+\alpha_4+\beta_2<\gamma,
\alpha_1+\alpha_2+2\alpha_3+2\alpha_4+\beta_1+2\beta_2+\delta=3\gamma
$ \\ \hline $(1,1,2,3;1,2;1;3)$ &
$\alpha_1+\alpha_2+\alpha_3+\alpha_4<\gamma,
\alpha_1+\alpha_2+\alpha_3+\alpha_4<\gamma,
\alpha_1+\alpha_2+\alpha_3+2\alpha_4+\beta_2+\delta<2\gamma,
\alpha_4+\beta_1+\beta_2<\gamma,
\alpha_3+2\alpha_4+\beta_1+\beta_2+\delta<2\gamma,
\alpha_3+\alpha_4+\beta_2<\gamma,
\alpha_1+\alpha_2+2\alpha_3+3\alpha_4+\beta_1+2\beta_2+\delta=3\gamma
$ \\ \hline $(1,1,1,2;1,2;2;3)$ & $\beta_2+\delta<\gamma,
\beta_2+\delta<\gamma, \beta_2+\delta<\gamma,
\alpha_4+\beta_1+2\beta_2+\delta<2\gamma, \alpha_4+\delta<\gamma,
\alpha_1+\alpha_2+\alpha_3+2\alpha_4+\beta_2+\delta<2\gamma,
\alpha_1+\alpha_2+\alpha_3+\alpha_4+\beta_1+\beta_2+\delta<2\gamma,
\alpha_1+\alpha_2+\alpha_3+2\alpha_4+\beta_1+2\beta_2+2\delta=3\gamma
$ \\ \hline $(1,1,2,2;1,2;2;3)$ & $\beta_2+\delta<\gamma,
\beta_2+\delta<\gamma,
\alpha_3+\alpha_4+\beta_1+2\beta_2+\delta<2\gamma,
\alpha_3+\alpha_4+\beta_1+2\beta_2+\delta<2\gamma,
\alpha_3+\alpha_4+\delta<\gamma,
\alpha_1+\alpha_2+2\alpha_3+2\alpha_4+\beta_2+\delta<2\gamma,
\alpha_1+\alpha_2+\alpha_3+\alpha_4+\beta_1+\beta_2+\delta<2\gamma,
\alpha_1+\alpha_2+2\alpha_3+2\alpha_4+\beta_1+2\beta_2+2\delta=3\gamma
$ \\ \hline $(1,1,2,3;1,2;2;3)$ & $\alpha_4+\beta_2+\delta<\gamma,
\alpha_4+\beta_2+\delta<\gamma,
\alpha_3+2\alpha_4+\beta_1+2\beta_2+\delta<2\gamma,
\alpha_3+\alpha_4+\delta<\gamma,
\alpha_1+\alpha_2+2\alpha_3+2\alpha_4+\beta_2+\delta<2\gamma,
\alpha_1+\alpha_2+\alpha_3+2\alpha_4+\beta_1+\beta_2+\delta<2\gamma,
\alpha_1+\alpha_2+2\alpha_3+3\alpha_4+\beta_1+2\beta_2+2\delta=3\gamma
$ \\ \hline $(1,2,2,2;1,2;1;3)$ &
$\alpha_1+\alpha_2+\alpha_3+\alpha_4<\gamma,
\alpha_1+\alpha_2+\alpha_3+\alpha_4+\beta_2+\delta<2\gamma,
\alpha_1+\alpha_2+\alpha_3+\alpha_4+\beta_2+\delta<2\gamma,
\alpha_1+\alpha_2+\alpha_3+\alpha_4+\beta_2+\delta<2\gamma,
\beta_1+\beta_2<\gamma,
\alpha_2+\alpha_3+\alpha_4+\beta_1+\beta_2+\delta<2\gamma,
\alpha_2+\alpha_3+\alpha_4+\beta_2<\gamma,
\alpha_1+2\alpha_2+2\alpha_3+2\alpha_4+\beta_1+2\beta_2+\delta=3\gamma
$ \\ \hline $(1,2,2,3;1,2;1;3)$ &
$\alpha_1+\alpha_2+\alpha_3+\alpha_4<\gamma,
\alpha_1+\alpha_2+\alpha_3+2\alpha_4+\beta_2+\delta<2\gamma,
\alpha_1+\alpha_2+\alpha_3+2\alpha_4+\beta_2+\delta<2\gamma,
\alpha_4+\beta_1+\beta_2<\gamma,
\alpha_2+\alpha_3+2\alpha_4+\beta_1+\beta_2+\delta<2\gamma,
\alpha_2+\alpha_3+\alpha_4+\beta_2<\gamma,
\alpha_1+2\alpha_2+2\alpha_3+3\alpha_4+\beta_1+2\beta_2+\delta=3\gamma
$ \\ \hline $(1,2,2,2;1,2;2;3)$ & $\beta_2+\delta<\gamma,
\alpha_2+\alpha_3+\alpha_4+\beta_1+2\beta_2+\delta<2\gamma,
\alpha_2+\alpha_3+\alpha_4+\beta_1+2\beta_2+\delta<2\gamma,
\alpha_2+\alpha_3+\alpha_4+\beta_1+2\beta_2+\delta<2\gamma,
\alpha_2+\alpha_3+\alpha_4+\delta<\gamma,
\alpha_1+2\alpha_2+2\alpha_3+2\alpha_4+\beta_2+\delta<2\gamma,
\alpha_1+\alpha_2+\alpha_3+\alpha_4+\beta_1+\beta_2+\delta<2\gamma,
\alpha_1+2\alpha_2+2\alpha_3+2\alpha_4+\beta_1+2\beta_2+2\delta=3\gamma
$ \\ \hline $(1,2,2,3;1,2;2;3)$ & $\alpha_4+\beta_2+\delta<\gamma,
\alpha_2+\alpha_3+2\alpha_4+\beta_1+2\beta_2+\delta<2\gamma,
\alpha_2+\alpha_3+2\alpha_4+\beta_1+2\beta_2+\delta<2\gamma,
\alpha_2+\alpha_3+\alpha_4+\delta<\gamma,
\alpha_1+2\alpha_2+2\alpha_3+2\alpha_4+\beta_2+\delta<2\gamma,
\alpha_1+\alpha_2+\alpha_3+2\alpha_4+\beta_1+\beta_2+\delta<2\gamma,
\alpha_1+2\alpha_2+2\alpha_3+3\alpha_4+\beta_1+2\beta_2+2\delta=3\gamma
$ \\ \hline $(1,2,3,3;1,2;1;3)$ &
$\alpha_1+\alpha_2+\alpha_3+\alpha_4<\gamma,
\alpha_1+\alpha_2+2\alpha_3+2\alpha_4+\beta_2+\delta<2\gamma,
\alpha_3+\alpha_4+\beta_1+\beta_2<\gamma,
\alpha_2+2\alpha_3+2\alpha_4+\beta_1+\beta_2+\delta<2\gamma,
\alpha_2+\alpha_3+\alpha_4+\beta_2<\gamma,
\alpha_1+2\alpha_2+3\alpha_3+3\alpha_4+\beta_1+2\beta_2+\delta=3\gamma
$ \\ \hline $(1,2,3,3;1,2;2;3)$ &
$\alpha_3+\alpha_4+\beta_2+\delta<\gamma,
\alpha_2+2\alpha_3+2\alpha_4+\beta_1+2\beta_2+\delta<2\gamma,
\alpha_2+\alpha_3+\alpha_4+\delta<\gamma,
\alpha_1+2\alpha_2+2\alpha_3+2\alpha_4+\beta_2+\delta<2\gamma,
\alpha_1+\alpha_2+2\alpha_3+2\alpha_4+\beta_1+\beta_2+\delta<2\gamma,
\alpha_1+2\alpha_2+3\alpha_3+3\alpha_4+\beta_1+2\beta_2+2\delta=3\gamma
$ \\ \hline $(0,1,2,3;1,2;2;4)$ &
$\alpha_2+\alpha_3+\alpha_4<\gamma,
\alpha_2+\alpha_3+\alpha_4+\beta_2+\delta<2\gamma,
\alpha_2+\alpha_3+2\alpha_4+\beta_1+2\beta_2+\delta<3\gamma,
\alpha_4+\delta<\gamma, \alpha_3+2\alpha_4+\beta_2+\delta<2\gamma,
\alpha_3+\alpha_4+\beta_1+\beta_2+\delta<2\gamma,
\alpha_2+2\alpha_3+3\alpha_4+\beta_1+2\beta_2+2\delta=4\gamma $ \\
\hline $(1,1,2,3;1,2;2;4)$ &
$\alpha_1+\alpha_2+\alpha_3+\alpha_4<\gamma,
\alpha_1+\alpha_2+\alpha_3+\alpha_4<\gamma,
\alpha_1+\alpha_2+\alpha_3+\alpha_4+\beta_2+\delta<2\gamma,
\alpha_1+\alpha_2+\alpha_3+2\alpha_4+\beta_1+2\beta_2+\delta<3\gamma,
\alpha_4+\delta<\gamma, \alpha_3+2\alpha_4+\beta_2+\delta<2\gamma,
\alpha_3+\alpha_4+\beta_1+\beta_2+\delta<2\gamma,
\alpha_1+\alpha_2+2\alpha_3+3\alpha_4+\beta_1+2\beta_2+2\delta=4\gamma
$ \\ \hline $(1,2,2,3;1,2;2;4)$ &
$\alpha_1+\alpha_2+\alpha_3+\alpha_4<\gamma,
\alpha_1+\alpha_2+\alpha_3+\alpha_4+\beta_2+\delta<2\gamma,
\alpha_1+\alpha_2+\alpha_3+\alpha_4+\beta_2+\delta<2\gamma,
\alpha_1+\alpha_2+\alpha_3+2\alpha_4+\beta_1+2\beta_2+\delta<3\gamma,
\alpha_4+\delta<\gamma,
\alpha_2+\alpha_3+2\alpha_4+\beta_2+\delta<2\gamma,
\alpha_2+\alpha_3+\alpha_4+\beta_1+\beta_2+\delta<2\gamma,
\alpha_1+2\alpha_2+2\alpha_3+3\alpha_4+\beta_1+2\beta_2+2\delta=4\gamma
$ \\ \hline $(1,2,3,3;1,2;2;4)$ &
$\alpha_1+\alpha_2+\alpha_3+\alpha_4<\gamma,
\alpha_1+\alpha_2+\alpha_3+\alpha_4+\beta_2+\delta<2\gamma,
\alpha_1+\alpha_2+2\alpha_3+2\alpha_4+\beta_1+2\beta_2+\delta<3\gamma,
\alpha_1+\alpha_2+2\alpha_3+2\alpha_4+\beta_1+2\beta_2+\delta<3\gamma,
\alpha_3+\alpha_4+\delta<\gamma,
\alpha_2+2\alpha_3+2\alpha_4+\beta_2+\delta<2\gamma,
\alpha_2+\alpha_3+\alpha_4+\beta_1+\beta_2+\delta<2\gamma,
\alpha_1+2\alpha_2+3\alpha_3+3\alpha_4+\beta_1+2\beta_2+2\delta=4\gamma
$ \\ \hline $(1,2,3,4;1,2;2;4)$ &
$\alpha_1+\alpha_2+\alpha_3+\alpha_4<\gamma,
\alpha_1+\alpha_2+\alpha_3+2\alpha_4+\beta_2+\delta<2\gamma,
\alpha_1+\alpha_2+2\alpha_3+3\alpha_4+\beta_1+2\beta_2+\delta<3\gamma,
\alpha_3+\alpha_4+\delta<\gamma,
\alpha_2+2\alpha_3+2\alpha_4+\beta_2+\delta<2\gamma,
\alpha_2+\alpha_3+2\alpha_4+\beta_1+\beta_2+\delta<2\gamma,
\alpha_1+2\alpha_2+3\alpha_3+4\alpha_4+\beta_1+2\beta_2+2\delta=4\gamma
$ \\ \hline $(0,1,2,3;1,3;2;4)$ & $\beta_2+\delta<\gamma,
\alpha_4+\beta_1+2\beta_2+\delta<2\gamma,
\alpha_3+2\alpha_4+\beta_1+2\beta_2+2\delta<3\gamma,
\alpha_3+\alpha_4+\beta_2<\gamma,
\alpha_2+2\alpha_3+2\alpha_4+\beta_1+2\beta_2+\delta<3\gamma,
\alpha_2+\alpha_3+2\alpha_4+\beta_2+\delta<2\gamma,
\alpha_2+2\alpha_3+3\alpha_4+\beta_1+3\beta_2+2\delta=4\gamma $ \\
\hline $(1,1,2,3;1,3;2;4)$ & $\beta_2+\delta<\gamma,
\beta_2+\delta<\gamma, \alpha_4+\beta_1+2\beta_2+\delta<2\gamma,
\alpha_3+2\alpha_4+\beta_1+2\beta_2+2\delta<3\gamma,
\alpha_3+\alpha_4+\beta_2<\gamma,
\alpha_1+\alpha_2+2\alpha_3+2\alpha_4+\beta_1+2\beta_2+\delta<3\gamma,
\alpha_1+\alpha_2+\alpha_3+2\alpha_4+\beta_2+\delta<2\gamma,
\alpha_1+\alpha_2+2\alpha_3+3\alpha_4+\beta_1+3\beta_2+2\delta=4\gamma
$ \\ \hline $(1,2,2,3;1,3;2;4)$ & $\beta_2+\delta<\gamma,
\alpha_4+\beta_1+2\beta_2+\delta<2\gamma,
\alpha_4+\beta_1+2\beta_2+\delta<2\gamma,
\alpha_2+\alpha_3+2\alpha_4+\beta_1+2\beta_2+2\delta<3\gamma,
\alpha_2+\alpha_3+\alpha_4+\beta_2<\gamma,
\alpha_1+2\alpha_2+2\alpha_3+2\alpha_4+\beta_1+2\beta_2+\delta<3\gamma,
\alpha_1+\alpha_2+\alpha_3+2\alpha_4+\beta_2+\delta<2\gamma,
\alpha_1+2\alpha_2+2\alpha_3+3\alpha_4+\beta_1+3\beta_2+2\delta=4\gamma
$ \\ \hline $(1,2,3,3;1,3;2;4)$ & $\beta_2+\delta<\gamma,
\alpha_3+\alpha_4+\beta_1+2\beta_2+\delta<2\gamma,
\alpha_2+2\alpha_3+2\alpha_4+\beta_1+2\beta_2+2\delta<3\gamma,
\alpha_2+2\alpha_3+2\alpha_4+\beta_1+2\beta_2+2\delta<3\gamma,
\alpha_2+\alpha_3+\alpha_4+\beta_2<\gamma,
\alpha_1+2\alpha_2+2\alpha_3+2\alpha_4+\beta_1+2\beta_2+\delta<3\gamma,
\alpha_1+\alpha_2+2\alpha_3+2\alpha_4+\beta_2+\delta<2\gamma,
\alpha_1+2\alpha_2+3\alpha_3+3\alpha_4+\beta_1+3\beta_2+2\delta=4\gamma
$ \\ \hline $(1,2,3,4;1,3;2;4)$ & $\alpha_4+\beta_2+\delta<\gamma,
\alpha_3+2\alpha_4+\beta_1+2\beta_2+\delta<2\gamma,
\alpha_2+2\alpha_3+3\alpha_4+\beta_1+2\beta_2+2\delta<3\gamma,
\alpha_2+\alpha_3+\alpha_4+\beta_2<\gamma,
\alpha_1+2\alpha_2+2\alpha_3+3\alpha_4+\beta_1+2\beta_2+\delta<3\gamma,
\alpha_1+\alpha_2+2\alpha_3+2\alpha_4+\beta_2+\delta<2\gamma,
\alpha_1+2\alpha_2+3\alpha_3+4\alpha_4+\beta_1+3\beta_2+2\delta=4\gamma
$ \\ \hline $(0,1,2,3;2,3;2;4)$ & $\alpha_4+\beta_1+\beta_2<\gamma,
\alpha_3+2\alpha_4+\beta_1+\beta_2+\delta<2\gamma,
\alpha_2+2\alpha_3+3\alpha_4+\beta_1+2\beta_2+\delta<3\gamma,
\alpha_2+\alpha_3+\alpha_4+\beta_1+\beta_2+\delta<2\gamma,
\alpha_2+\alpha_3+2\alpha_4+\beta_1+2\beta_2+2\delta<3\gamma,
\alpha_3+\alpha_4+\beta_1+2\beta_2+\delta<2\gamma,
\alpha_2+2\alpha_3+3\alpha_4+2\beta_1+3\beta_2+2\delta=4\gamma $ \\
\hline $(1,1,2,3;2,3;2;4)$ & $\alpha_4+\beta_1+\beta_2<\gamma,
\alpha_4+\beta_1+\beta_2<\gamma,
\alpha_3+2\alpha_4+\beta_1+\beta_2+\delta<2\gamma,
\alpha_1+\alpha_2+2\alpha_3+3\alpha_4+\beta_1+2\beta_2+\delta<3\gamma,
\alpha_1+\alpha_2+\alpha_3+\alpha_4+\beta_1+\beta_2+\delta<2\gamma,
\alpha_1+\alpha_2+\alpha_3+2\alpha_4+\beta_1+2\beta_2+2\delta<3\gamma,
\alpha_3+\alpha_4+\beta_1+2\beta_2+\delta<2\gamma,
\alpha_1+\alpha_2+2\alpha_3+3\alpha_4+2\beta_1+3\beta_2+2\delta=4\gamma
$ \\ \hline $(1,2,2,3;2,3;2;4)$ & $\alpha_4+\beta_1+\beta_2<\gamma,
\alpha_2+\alpha_3+2\alpha_4+\beta_1+\beta_2+\delta<2\gamma,
\alpha_2+\alpha_3+2\alpha_4+\beta_1+\beta_2+\delta<2\gamma,
\alpha_1+2\alpha_2+2\alpha_3+3\alpha_4+\beta_1+2\beta_2+\delta<3\gamma,
\alpha_1+\alpha_2+\alpha_3+\alpha_4+\beta_1+\beta_2+\delta<2\gamma,
\alpha_1+\alpha_2+\alpha_3+2\alpha_4+\beta_1+2\beta_2+2\delta<3\gamma,
\alpha_2+\alpha_3+\alpha_4+\beta_1+2\beta_2+\delta<2\gamma,
\alpha_1+2\alpha_2+2\alpha_3+3\alpha_4+2\beta_1+3\beta_2+2\delta=4\gamma
$ \\ \hline $(1,2,3,3;2,3;2;4)$ &
$\alpha_3+\alpha_4+\beta_1+\beta_2<\gamma,
\alpha_2+2\alpha_3+2\alpha_4+\beta_1+\beta_2+\delta<2\gamma,
\alpha_1+2\alpha_2+3\alpha_3+3\alpha_4+\beta_1+2\beta_2+\delta<3\gamma,
\alpha_1+2\alpha_2+3\alpha_3+3\alpha_4+\beta_1+2\beta_2+\delta<3\gamma,
\alpha_1+\alpha_2+\alpha_3+\alpha_4+\beta_1+\beta_2+\delta<2\gamma,
\alpha_1+\alpha_2+2\alpha_3+2\alpha_4+\beta_1+2\beta_2+2\delta<3\gamma,
\alpha_2+\alpha_3+\alpha_4+\beta_1+2\beta_2+\delta<2\gamma,
\alpha_1+2\alpha_2+3\alpha_3+3\alpha_4+2\beta_1+3\beta_2+2\delta=4\gamma
$ \\ \hline $(1,2,3,4;2,3;2;4)$ &
$\alpha_3+\alpha_4+\beta_1+\beta_2<\gamma,
\alpha_2+2\alpha_3+2\alpha_4+\beta_1+\beta_2+\delta<2\gamma,
\alpha_1+2\alpha_2+3\alpha_3+3\alpha_4+\beta_1+2\beta_2+\delta<3\gamma,
\alpha_1+\alpha_2+\alpha_3+2\alpha_4+\beta_1+\beta_2+\delta<2\gamma,
\alpha_1+\alpha_2+2\alpha_3+3\alpha_4+\beta_1+2\beta_2+2\delta<3\gamma,
\alpha_2+\alpha_3+2\alpha_4+\beta_1+2\beta_2+\delta<2\gamma,
\alpha_1+2\alpha_2+3\alpha_3+4\alpha_4+2\beta_1+3\beta_2+2\delta=4\gamma
$ \\ \hline $(1,2,3,4;1,3;2;5)$ &
$\alpha_1+\alpha_2+\alpha_3+\alpha_4<\gamma,
\alpha_1+\alpha_2+\alpha_3+\alpha_4+\beta_2+\delta<2\gamma,
\alpha_1+\alpha_2+\alpha_3+2\alpha_4+\beta_1+2\beta_2+\delta<3\gamma,
\alpha_1+\alpha_2+2\alpha_3+3\alpha_4+\beta_1+2\beta_2+2\delta<4\gamma,
\alpha_3+\alpha_4+\beta_2<\gamma,
\alpha_2+2\alpha_3+2\alpha_4+\beta_1+2\beta_2+\delta<3\gamma,
\alpha_2+\alpha_3+2\alpha_4+\beta_2+\delta<2\gamma,
\alpha_1+2\alpha_2+3\alpha_3+4\alpha_4+\beta_1+3\beta_2+2\delta=5\gamma
$ \\ \hline $(1,2,3,4;2,3;2;5)$ &
$\alpha_1+\alpha_2+\alpha_3+\alpha_4<\gamma,
\alpha_4+\beta_1+\beta_2<\gamma,
\alpha_3+2\alpha_4+\beta_1+\beta_2+\delta<2\gamma,
\alpha_2+2\alpha_3+3\alpha_4+\beta_1+2\beta_2+\delta<3\gamma,
\alpha_2+\alpha_3+\alpha_4+\beta_1+\beta_2+\delta<2\gamma,
\alpha_1+2\alpha_2+2\alpha_3+3\alpha_4+\beta_1+2\beta_2+2\delta<4\gamma,
\alpha_1+\alpha_2+2\alpha_3+2\alpha_4+\beta_1+2\beta_2+\delta<3\gamma,
\alpha_1+2\alpha_2+3\alpha_3+4\alpha_4+2\beta_1+3\beta_2+2\delta=5\gamma
$ \\ \hline $(1,2,3,4;2,4;2;5)$ & $\alpha_4+\beta_1+\beta_2<\gamma,
\alpha_2+\alpha_3+\alpha_4+\beta_2<\gamma,
\alpha_1+2\alpha_2+2\alpha_3+2\alpha_4+\beta_1+2\beta_2+\delta<3\gamma,
\alpha_1+2\alpha_2+2\alpha_3+3\alpha_4+\beta_1+3\beta_2+2\delta<4\gamma,
\alpha_3+\alpha_4+\beta_1+2\beta_2+\delta<2\gamma,
\alpha_2+2\alpha_3+3\alpha_4+2\beta_1+3\beta_2+2\delta<4\gamma,
\alpha_1+\alpha_2+2\alpha_3+3\alpha_4+\beta_1+2\beta_2+\delta<3\gamma,
\alpha_1+2\alpha_2+3\alpha_3+4\alpha_4+2\beta_1+4\beta_2+2\delta=5\gamma
$ \\ \hline $(1,2,3,4;1,3;3;5)$ & $\beta_2+\delta<\gamma,
\alpha_3+\alpha_4+\delta<\gamma,
\alpha_2+2\alpha_3+2\alpha_4+\beta_2+\delta<2\gamma,
\alpha_1+2\alpha_2+3\alpha_3+3\alpha_4+\beta_1+2\beta_2+2\delta<4\gamma,
\alpha_1+\alpha_2+\alpha_3+2\alpha_4+\beta_2+\delta<2\gamma,
\alpha_1+\alpha_2+2\alpha_3+3\alpha_4+\beta_1+3\beta_2+2\delta<4\gamma,
\alpha_2+\alpha_3+2\alpha_4+\beta_1+2\beta_2+2\delta<3\gamma,
\alpha_1+2\alpha_2+3\alpha_3+4\alpha_4+\beta_1+3\beta_2+3\delta=5\gamma
$ \\ \hline $(1,2,3,4;2,3;3;5)$ & $\alpha_3+\alpha_4+\delta<\gamma,
\alpha_1+\alpha_2+\alpha_3+\alpha_4+\beta_1+\beta_2+\delta<2\gamma,
\alpha_1+\alpha_2+\alpha_3+2\alpha_4+\beta_1+2\beta_2+2\delta<3\gamma,
\alpha_1+\alpha_2+2\alpha_3+3\alpha_4+2\beta_1+3\beta_2+2\delta<4\gamma,
\alpha_2+\alpha_3+2\alpha_4+\beta_1+\beta_2+\delta<2\gamma,
\alpha_1+2\alpha_2+3\alpha_3+4\alpha_4+\beta_1+2\beta_2+2\delta<4\gamma,
\alpha_2+2\alpha_3+2\alpha_4+\beta_1+2\beta_2+2\delta<3\gamma,
\alpha_1+2\alpha_2+3\alpha_3+4\alpha_4+2\beta_1+3\beta_2+3\delta=5\gamma
$ \\ \hline $(1,2,3,4;2,4;3;5)$ & $\alpha_4+\beta_2+\delta<\gamma,
\alpha_3+2\alpha_4+\beta_1+2\beta_2+\delta<2\gamma,
\alpha_2+2\alpha_3+3\alpha_4+\beta_1+2\beta_2+2\delta<3\gamma,
\alpha_1+2\alpha_2+3\alpha_3+4\alpha_4+\beta_1+3\beta_2+2\delta<4\gamma,
\alpha_2+\alpha_3+\alpha_4+\beta_1+2\beta_2+\delta<2\gamma,
\alpha_1+2\alpha_2+2\alpha_3+3\alpha_4+2\beta_1+3\beta_2+2\delta<4\gamma,
\alpha_1+\alpha_2+2\alpha_3+2\alpha_4+\beta_1+2\beta_2+2\delta<3\gamma,
\alpha_1+2\alpha_2+3\alpha_3+4\alpha_4+2\beta_1+4\beta_2+3\delta=5\gamma
$ \\ \hline $(1,2,3,4;2,4;3;6)$ & $\beta_2+\delta<\gamma,
\alpha_4+\beta_1+2\beta_2+\delta<2\gamma,
\alpha_3+2\alpha_4+\beta_1+2\beta_2+2\delta<3\gamma,
\alpha_2+2\alpha_3+3\alpha_4+\beta_1+3\beta_2+2\delta<4\gamma,
\alpha_2+\alpha_3+\alpha_4+\beta_1+\beta_2+\delta<2\gamma,
\alpha_1+2\alpha_2+2\alpha_3+3\alpha_4+\beta_1+2\beta_2+2\delta<4\gamma,
\alpha_1+\alpha_2+2\alpha_3+2\alpha_4+\beta_1+2\beta_2+\delta<3\gamma,
\alpha_1+2\alpha_2+3\alpha_3+4\alpha_4+2\beta_1+4\beta_2+3\delta=6\gamma
$ \\ \hline $(1,2,3,5;2,4;3;6)$ & $\alpha_4+\beta_1+\beta_2<\gamma,
\alpha_3+2\alpha_4+\beta_1+\beta_2+\delta<2\gamma,
\alpha_2+2\alpha_3+3\alpha_4+\beta_1+2\beta_2+\delta<3\gamma,
\alpha_1+2\alpha_2+3\alpha_3+4\alpha_4+2\beta_1+3\beta_2+2\delta<5\gamma,
\alpha_1+\alpha_2+\alpha_3+2\alpha_4+\beta_2+\delta<2\gamma,
\alpha_1+\alpha_2+2\alpha_3+3\alpha_4+\beta_1+3\beta_2+2\delta<4\gamma,
\alpha_2+\alpha_3+2\alpha_4+\beta_1+2\beta_2+2\delta<3\gamma,
\alpha_1+2\alpha_2+3\alpha_3+5\alpha_4+2\beta_1+4\beta_2+3\delta=6\gamma
$ \\ \hline $(1,2,4,5;2,4;3;6)$ & $\alpha_3+\alpha_4+\delta<\gamma,
\alpha_2+2\alpha_3+2\alpha_4+\beta_2+\delta<2\gamma,
\alpha_1+2\alpha_2+3\alpha_3+3\alpha_4+\beta_1+2\beta_2+2\delta<4\gamma,
\alpha_1+2\alpha_2+3\alpha_3+4\alpha_4+\beta_1+3\beta_2+3\delta<5\gamma,
\alpha_3+\alpha_4+\beta_1+2\beta_2+\delta<2\gamma,
\alpha_2+2\alpha_3+3\alpha_4+2\beta_1+3\beta_2+2\delta<4\gamma,
\alpha_1+\alpha_2+2\alpha_3+3\alpha_4+\beta_1+2\beta_2+\delta<3\gamma,
\alpha_1+2\alpha_2+4\alpha_3+5\alpha_4+2\beta_1+4\beta_2+3\delta=6\gamma
$ \\ \hline $(1,3,4,5;2,4;3;6)$ &
$\alpha_2+\alpha_3+\alpha_4+\beta_2<\gamma,
\alpha_1+2\alpha_2+2\alpha_3+2\alpha_4+\beta_1+2\beta_2+\delta<3\gamma,
\alpha_1+2\alpha_2+2\alpha_3+3\alpha_4+\beta_1+3\beta_2+2\delta<4\gamma,
\alpha_1+2\alpha_2+3\alpha_3+4\alpha_4+2\beta_1+4\beta_2+2\delta<5\gamma,
\alpha_2+\alpha_3+2\alpha_4+\beta_1+\beta_2+\delta<2\gamma,
\alpha_1+2\alpha_2+3\alpha_3+4\alpha_4+\beta_1+2\beta_2+2\delta<4\gamma,
\alpha_2+2\alpha_3+2\alpha_4+\beta_1+2\beta_2+2\delta<3\gamma,
\alpha_1+3\alpha_2+4\alpha_3+5\alpha_4+2\beta_1+4\beta_2+3\delta=6\gamma
$ \\ \hline $(2,3,4,5;2,4;3;6)$ &
$\alpha_1+\alpha_2+\alpha_3+\alpha_4+\beta_1+\beta_2+\delta<2\gamma,
\alpha_1+\alpha_2+\alpha_3+2\alpha_4+\beta_1+2\beta_2+2\delta<3\gamma,
\alpha_1+\alpha_2+2\alpha_3+3\alpha_4+2\beta_1+3\beta_2+2\delta<4\gamma,
\alpha_1+2\alpha_2+3\alpha_3+4\alpha_4+2\beta_1+3\beta_2+3\delta<5\gamma,
\alpha_1+\alpha_2+2\alpha_3+2\alpha_4+\beta_2+\delta<2\gamma,
\alpha_1+2\alpha_2+3\alpha_3+3\alpha_4+\beta_1+3\beta_2+2\delta<4\gamma,
\alpha_1+2\alpha_2+2\alpha_3+3\alpha_4+\beta_1+2\beta_2+\delta<3\gamma,
2\alpha_1+3\alpha_2+4\alpha_3+5\alpha_4+2\beta_1+4\beta_2+3\delta=6\gamma
$ \\

\end{longtable}

\end{thm}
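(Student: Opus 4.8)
The plan is to reduce the problem, poset by poset and representation by representation, to the already-understood action of the Coxeter functors $\Phi^{+},\Phi^{-}$ on weights, and to read off the conditions $C$ from two sources: a single global \emph{trace} identity, which produces the defining equality in each row, and the positivity of the intermediate weights produced along the Coxeter reduction, which produces the strict inequalities. Since each poset in the list is of finite type, there are only finitely many indecomposable representations to treat; by the theorem of Drozd recalled in Section~\ref{linearCoxeter} each non-degenerate one is $(F^{+})^{k}\pi'$ with $\pi'$ degenerate, so the whole argument is finite and can be organised as an induction along the Coxeter orbit, exactly mirroring the algorithm indicated in the proof of the preceding Proposition.

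First I would record the trace condition. Applying the (normalized) trace to the defining relation $\alpha_1 P_1+\dots+\alpha_n P_n=\gamma I$ of a unitarization gives $\sum_{j,i}\alpha^{(j)}_i\dim V^{(j)}_i=\gamma\dim V$, which is precisely the unique linear equality occurring in every row of the tables. For the base of the induction, a degenerate representation reduces to a strictly smaller poset (as explained after the definition of non-degeneracy in Section~2), and a one-dimensional representation unitarizes \emph{exactly} when this trace identity holds, as already noted in the proof of the preceding Proposition. Hence for the minimal members of each Coxeter orbit the conditions $C$ collapse to the trace equality alone, which matches the top rows of each table.

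Next I would carry out the inductive step through the functors $\Phi^{\pm}$. The essential point, furnished by Section~2, is that $\Phi^{-}$ is an equivalence carrying $\mathrm{Rep}_{\chi}(\mathcal N,H)$ to $\mathrm{Rep}_{\chi_{\Phi^{-}}}(\mathcal N,H)$, so $\pi$ unitarizes with weight $\chi$ iff its Coxeter reduct unitarizes with the transformed weight $\chi_{\Phi^{-}}$, whose components are given by the explicit affine formula there. Because weights are required to lie in $\mathbb R^{n+1}_{+}$, the reduct is genuine only when every component of $\chi_{\Phi^{-}}$ is strictly positive; iterating ``all components of $\chi_{\Phi^{-}},\chi_{(\Phi^{-})^{2}},\dots$ positive'' down the orbit and translating back through the formulas yields precisely the strict inequalities listed in $C$. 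Each inequality is the positivity of one weight-component at one stage of the reduction (e.g.\ $\sum_{i=1}^{k_1}\alpha^{(1)}_i<\gamma$ is positivity of the first component of $\chi_{\Phi^{+}}$, equivalently the non-degeneracy $P\ne I$ at that vertex). Conversely, if $\chi$ obeys the trace equality together with all these inequalities, then every weight in the chain remains positive, the reduction terminates at a degenerate representation that unitarizes by the trace condition, and applying $(\Phi^{+})^{k}$ to that unitary representation reconstructs a unitarization of $\pi$ with the original weight~$\chi$. This gives simultaneously the sufficiency asserted in the theorem and the completeness of each list.

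The remaining work is the case-by-case computation, and this is where the main obstacle lies. One must enumerate the indecomposables of each poset---equivalently the positive roots of the Dynkin diagrams $A_{k+1},D_{k+3},E_6,E_7,E_8$ attached to it in the classification of finite-type primitive posets---determine the Coxeter orbit of each, and apply the weight-transformation formulas repeatedly, verifying at every step that the recorded inequalities are neither too weak nor too strong, i.e.\ that they are \emph{exactly} the positivity constraints and that the chain really halts at a degenerate representation. The bulk of the difficulty is the purely combinatorial bookkeeping for the long orbits of $E_7$ (poset $(3,2,1)$) and $E_8$ (poset $(4,2,1)$), where the reductions are many steps deep and the number of indecomposables is large. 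A useful independent check is to pass through the isomorphism $\phi$ to the star-shaped algebras $\mathcal A_{\Gamma,\hat\chi}$ with $\hat\chi$ given by $\beta^{(j)}_i=\sum_{s=i}^{k_j}\alpha^{(j)}_s$ and compare the resulting constraints with those computed in~\cite{KPS05}.
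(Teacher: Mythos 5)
Your proposal follows essentially the same route the paper takes: the authors give no separate proof of the theorem, but state explicitly that it is obtained by the algorithm from the preceding Proposition, namely reducing each indecomposable representation along the Coxeter orbit $(F^{-})^{k}\pi=\pi'$ to a degenerate one, unitarizing the latter via the trace condition, and transporting the weight back with $(\Phi^{+})^{k}$, the equality in each row being the trace identity and the inequalities the positivity of the intermediate weight components. Your reconstruction of this scheme, including the identification of the residual case-by-case bookkeeping for the $E_{7}$ and $E_{8}$ orbits as the main labour, is faithful to the paper's argument.
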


\vspace{0.5cm}

 \noindent \textbf{Acknowledgments.} The authors are
deeply appreciate to Prof. Yu.Samoilenko for the statement of
problem and for his constant attention to this work.

\end{document}